\DeclareMathAlphabet{\mathpzc}{OT1}{pzc}{m}{it}
\theoremstyle{plain}
\newtheorem{Thm}{Theorem}[section]
\newtheorem{Prop}[Thm]{Proposition}
\newtheorem{Lem}[Thm]{Lemma}
\newtheorem{Coro}[Thm]{Corollary}
\theoremstyle{definition}
\newtheorem{Def}[Thm]{Definition}
\newtheorem{Rems}[Thm]{Remarks}
\numberwithin{equation}{section}
\def\bffkk{\boldsymbol{\frak k}}
\newcommand{\bfi}{{\mathbf{i}}}
\newcommand{\bfP}{{\mathbf{P}}}
\newcommand{\bfQ}{{\mathbf{Q}}}
\def\fS{{\frak S}}
\newcommand{\ms}{\mathscr}
\newcommand{\msP}{\mathscr P}
\newcommand{\sfz}{{\mathsf z}}
\newcommand{\mpZ}{\mathpzc Z}
\newcommand{\mpf}{\mathpzc f}
\def\sF{{\mathcal F}}
\def\sG{{\mathsf G}}
\def\sGvep{{\mathsf G}_{\varepsilon}}
\def\sH{{\mathcal H}}
\def\sP{{\mathcal P}}
\newcommand{\mbnn}{\mathbb N^{n}}
\newcommand{\mbn}{\mathbb N}
\newcommand{\mbc}{\mathbb C}
\newcommand{\mbz}{\mathbb Z}
\newcommand{\ttk}{\mathtt{k}}
\newcommand{\tth}{\mathtt{h}}
\newcommand{\ttx}{\mathtt{x}}
\newcommand{\ttg}{\mathtt{g}}
\newcommand{\End}{\operatorname{End}}
\newcommand{\la}{{\lambda}}
\newcommand{\La}{\Lambda}
\newcommand{\ga}{{\gamma}}
\newcommand{\dt}{\delta}
\newcommand{\Dt}{\Delta}
\newcommand{\Og}{\Omega}
\newcommand{\og}{\omega}
\newcommand{\vi}{\varphi}
\newcommand{\vep}{\varepsilon}
\newcommand{\ep}{\epsilon}
\newcommand{\al}{\alpha}
\newcommand{\sg}{\sigma}
\newcommand{\ol}{\overline}
\newcommand{\Lcp}{\bar L}
\newcommand{\Vcp}{\bar V}
\newcommand{\Mcp}{\bar M}
\newcommand{\Icp}{\bar I}
\def\ggp#1#2{\left[\kern-3.2pt\left[{#1\atop #2}\right]\kern-3.2pt\right]}
\def\hmod{{\text-}{\mathsf{mod}}}
\def\leq{\leqslant}\def\geq{\geqslant}
\def\le{\leqslant}
\newcommand{\bop}{\bigoplus}
\newcommand{\ot}{\otimes}
\newcommand{\han}{\subseteq}
\newcommand{\h}{\widehat}
\newcommand{\ti}{\widetilde}
\newcommand{\tila}{\widetilde\la}
\newcommand{\timu}{\widetilde\mu}
\newcommand{\tibfQ}{{\widetilde\bfQ}}
\newcommand{\Lanr}{\Lambda(n,r)}
\newcommand{\lra}{\longrightarrow}
\newcommand{\ra}{\rightarrow}
\newcommand{\zr}{\zeta_r}
\newcommand{\mnmod}{\!\!\!\mod\!}
\newcommand{\vtg}{{\!\vartriangle\!}}
\def\ttv{v}
\newcommand{\afHrv}{\sH_{\vtg}(r)_v}
\newcommand{\afHrvep}{\sH_{\vtg}(r)_{\varepsilon}}
\newcommand{\afHrZ}{{{\sH}_{\vtg}(r)_{{\mathpzc Z}}}}
\newcommand{\afUglv}{U_{v}(\widehat{\frak{gl}}_n)}
\newcommand{\afUglZ}{U_{\mpZ}(\widehat{\frak{gl}}_n)}
\newcommand{\afUglvep}{U_{\vep}(\widehat{\frak{gl}}_n)}
\newcommand{\fSr}{\fS_r}
\newcommand{\afSrZ}{{\mathcal S}_{\vtg}(n,r)_{\mathpzc Z}}
\newcommand{\afSrv}{{\mathcal S}_{\vtg}(n,r)_{v}}
\newcommand{\afSrz}{{\mathcal S}_{\vtg}(n,r)_{t}}
\newcommand{\afSNrv}{{\mathcal S}_{\vtg}(N,r)_v}
\newcommand{\afSNrvep}{{\mathcal S}_{\vtg}(N,r)_{\vep}}
\newcommand{\afSNrZ}{{\mathcal S}_{\vtg}(N,r)_{\mathpzc Z}}
\newcommand{\afSrvep}{{\mathcal S}_{\vtg}(n,r)_{\vep}}
\newcommand{\bfone}{{\mathfrak l}}
\newcommand{\afUslv}{U_{v}(\widehat{\frak{sl}}_n)}
\newcommand{\afUslZ}{U_{\mathpzc Z}(\widehat{\frak{sl}}_n)}
\newcommand{\afUslZpm}{U_{\mathpzc Z}^\pm(\widehat{\frak{sl}}_n)}
\newcommand{\afUslZp}{U_{\mathpzc Z}^+(\widehat{\frak{sl}}_n)}
\newcommand{\afUslZm}{U_{\mathpzc Z}^-(\widehat{\frak{sl}}_n)}
\newcommand{\afUslZz}{U_{\mathpzc Z}^0(\widehat{\frak{sl}}_n)}
\newcommand{\afUslvep}{U_{\varepsilon}(\widehat{\frak{sl}}_n)}
\newcommand{\afUnrZ}{U_{\vtg}(n,r)_{\mathpzc Z}}
\newcommand{\afUnZ}{U_{\vtg}(n)_{\mathpzc Z}}
\newcommand{\afUnv}{U_{\vtg}(n)_v}
\newcommand{\afUnrv}{U_{\vtg}(n,r)_v}
\newcommand{\afUnrz}{U_{\vtg}(n,r)_t}
\newcommand{\afUnvep}{U_{\vtg}(n)_{\varepsilon}}
\newcommand{\afUnrvep}{U_{\vtg}(n,r)_{\varepsilon}}
\newcommand{\afUNrvep}{U_{\vtg}(N,r)_{\varepsilon}}
\newcommand{\Pn}{\mathpzc P(n)}
\newcommand{\Qn}{\mathpzc Q(n)}
\newcommand{\Qnr}{\mathpzc Q(n,r)}
\newcommand{\tiQnr}{\ti{\mathpzc Q}(n,r)}
\newcommand{\QNr}{\mathpzc Q(N,r)}
\newcommand{\Qnv}{\mathpzc Q(n)_{v}}
\newcommand{\mbcv}{\mathbb C(v)}
\newcommand{\OgC}{\Og_{\mathbb C}}
\newcommand{\Ogv}{\Og_{v}}
\newcommand{\OgZ}{\Og_{\mathpzc Z}}
\newcommand{\Ogvep}{\Og_{\varepsilon}}
\begin{document}
\title{Affine quantum Schur algebras at roots of unity}
\author{Qiang Fu}
\address{Department of Mathematics, Tongji University, Shanghai, 200092, China.}
\email{q.fu@hotmail.com}


\subjclass[2010]{Primary 20G05, 17B37, 20G43}
\thanks{Supported by the National Natural Science Foundation
of China, the Program NCET, Fok Ying Tung Education Foundation
 and the Fundamental Research Funds for the Central Universities}

\begin{abstract}
We will classify finite dimensional irreducible modules for affine quantum Schur algebras at roots of unity and generalize \cite[(6.5f) and (6.5g)]{Gr80} to the affine case in this paper.
\end{abstract}
 \sloppy \maketitle
\section{Introduction}
Finite dimensional irreducible modules for quantum affine algebras $U_v(\h{\frak{g}})$  were classified by Chari--Pressley when $v$ is not a root of unity in \cite{CP91,CPbk,CP95}. In the case where $v$ is an odd root of unity,  finite dimensional irreducible modules for $U_v(\h{\frak{g}})$ were classified by  Chari--Pressley in \cite{CP97}. Frenkel--Mukhin \cite{FM2} generalize Chari--Pressley's result to all roots of unity case.


Finite dimensional polynomial irreducible modules for quantum affine $\frak{gl}_n$ were classified by Frenkel--Mukhin in \cite{FM}. The representation theory of quantum affine $\frak{gl}_n$ is closely related to that of affine quantum Schur algebras. The affine quantum Schur algebra  was introduced by Ginzburg--Vasserot and Lusztig (cf. \cite{GV,Lu99}), which uses cyclic flags and convolution. The algebraic definition of affine quantum Schur algebras was given by R. Green in   \cite{Gr99}. Varagnolo--Vasserot \cite{VV99} proved that the two definitions  of affine quantum Schur algebras are equivalent.



Let $\afUnv$ be the quantum affine algebra over $\mbc(v)$ considered by Lusztig in \cite{Lu99}, where $v$ is an indeterminate. Note that the algebra $\afUnv$ is a proper subalgebra of quantum affine $\frak{gl}_n$. Let $\zeta_r$ be the natural algebra homomorphism from $\afUnv$ to the affine quantum Schur algebra $\afSrv$ over $\mbc(v)$ (see \cite[7.7]{Lu99}). Let $\afUnrv=\zeta_r(\afUnv)$. According to \cite[8.2]{Lu99}, the equality $\afUnrv=\afSrv$ holds if and only if $n>r$.

Let $\afSrZ$ be the affine quantum  Schur algebras over $\mpZ$ and let $\afUnrZ$ be the $\mpZ$-form of $\afUnrv$, where $\mpZ=\mbc[v,v^{-1}]$.  For any $t\in\mbc^*$, let $\afSrz=\afSrZ\ot_\mpZ\mbc$ and $\afUnrz=\afUnrZ\ot_\mpZ\mbc$, where $\mbc$ is regarded as a $\mpZ$-module by specializing $v$ to $t$. Finite dimensional irreducible modules for $\afSrz$ and $\afUnrz$ were classified in \cite{DDF} when $t$ is not a root of unity. In this paper, we will classify finite dimensional irreducible modules for the two algebras $\afSrvep$ and  $\afUnrvep$ in the case where $\vep\in\mbc$ is a primitive $l'$-th root of $1$.
Furthermore, we will generalize \cite[(6.5f) and (6.5g)]{Gr80} to the affine case.

We organize this paper as follows. We recall the definition of quantum affine $\frak{gl}_n$, the affine quantum Schur algebra $\afSrvep$ and the algebra $\afUnrvep$ in \S2, where $\vep\in\mbc$ is a primitive $l'$-th root of $1$. In \S3, we will construct a functor  $\sGvep:\afSNrvep\hmod\ra\afSrvep\hmod$, where $\afSrvep\hmod$ is the category of finite dimensional $\afSrvep$-modules. Furthermore we will generalize \cite[(6.5f)]{Gr80} to the affine case in Proposition \ref{prop1 of sGvep} and Proposition \ref{prop2 of sGvep}. Finally we
will classify finite dimensional irreducible $\afUnrvep$-modules in Theorem \ref{classification Uvtg vep(n,r)}, and use  Proposition \ref{prop1 of sGvep}, Proposition \ref{prop2 of sGvep} and Theorem \ref{classification Uvtg vep(n,r)} to classify finite dimensional irreducible modules for the affine quantum Schur algebra $\afSrvep$ in Theorem \ref{classification afSrvep}. In addition,
we will prove in Proposition \ref{category equivalence of affine quantum Schur algebras} that the functor $\sGvep:\afSNrvep\hmod\ra\afSrvep\hmod$
is an equivalence of categories in the case where $N\geq n\geq r$.
This result is the affine version of \cite[(6.5g)]{Gr80}.

Throughout, let $v$ be an indeterminate and let $\mpZ=\mbc[v,v^{-1}]$.
For $c\in\mbz$ and $t\in\mbn$ let
$$[c]_v=\frac{v^c-v^{-c}}{v-v^{-1}},\quad [t]_v^{!}=\prod_{1\leq i\leq t}[i]_v,\quad\bigg[{c\atop t}\bigg]_v=\prod_{s=1}^t\frac{v^{c-s+1}-v^{-c+s-1}}{v^s-v^{-s}}\in\mpZ.$$
Let $\vep\in\mbc$ be a primitive $l'$-th root of $1$.
Let $l\geq 1$ be defined by
$$l=
\begin{cases}
l'&\text{if $l'$ is
odd},\\
l'/2&\text{if $l'$ is even}.
\end{cases}$$
We make  $\mbc$ into a $\mpZ$-module by specializing $v$ to $\vep$. When $v$ is specialized to $\vep$, $[c]_v$, $[t]_v^!$ and
$\big[{c\atop t}\big]_v$
specialize to complex numbers $[c]_\vep$, $[t]_\vep^!$ and
$\big[{c\atop t}\big]_\vep$ respectively.

\section{Quantum affine $\frak{gl}_n$ and affine quantum Schur algebras}

\subsection{The algebras $\afUglv$, $\afUslv$ and $\afUnv$}

We recall the Drinfeld's new realization of quantum affine $\frak{gl}_n$ as follows (cf. \cite{FM}).
\begin{Def}\label{QLA}
 The {\it quantum loop algebra} $\afUglv$  (or {\it
quantum affine $\mathfrak {gl}_n$})  is the $\mbc(v)$-algebra generated by $\ttx^\pm_{i,s}$
($1\leq i<n$, $s\in\mbz$), $\ttk_i^{\pm1}$ and $\ttg_{i,t}$ ($1\leq
i\leq n$, $t\in\mbz\backslash\{0\}$) with the following relations:
\begin{itemize}
 \item[(QLA1)] $\ttk_i\ttk_i^{-1}=1=\ttk_i^{-1}\ttk_i,\,\;[\ttk_i,\ttk_j]=0$,
 \item[(QLA2)]
 $\ttk_i\ttx^\pm_{j,s}=\ttv^{\pm(\dt_{i,j}-\dt_{i,j+1})}\ttx^\pm_{j,s}\ttk_i,\;
               [\ttk_i,\ttg_{j,s}]=0$,
 \item[(QLA3)] $[\ttg_{i,s},\ttx^\pm_{j,t}]
               =\begin{cases}0,\;\;&\text{if $i\not=j,\,j+1$};\\
                  \pm \ttv^{-js}\frac{[s]_v}{s}\ttx^\pm_{j,s+t},\;\;\;&\text{if $i=j$};\\
                  \mp \ttv^{-js}\frac{[s]_v}{s}\ttx_{j,s+t}^\pm,\;\;\;&\text{if $i=j+1$,}
                \end{cases}$
 \item[(QLA4)] $[\ttg_{i,s},\ttg_{j,t}]=0$,
 \item[(QLA5)]
 $[\ttx_{i,s}^+,\ttx_{j,t}^-]=\dt_{i,j}\frac{\phi^+_{i,s+t}
 -\phi^-_{i,s+t}}{\ttv-\ttv^{-1}}$,
 \item[(QLA6)] $\ttx^\pm_{i,s}\ttx^\pm_{j,t}=\ttx^\pm_{j,t}\ttx^\pm_{i,s}$, for $|i-j|>1$, and
 $[\ttx_{i,s+1}^\pm,\ttx^\pm_{j,t}]_{\ttv^{\pm c_{ij}}}
               =-[\ttx_{j,t+1}^\pm,\ttx^\pm_{i,s}]_{\ttv^{\pm c_{ij}}}$,
 \item[(QLA7)]
 $[\ttx_{i,s}^\pm,[\ttx^\pm_{j,t},\ttx^\pm_{i,p}]_\ttv]_\ttv
 =-[\ttx_{i,p}^\pm,[\ttx^\pm_{j,t},\ttx^\pm_{i,s}]_\ttv]_\ttv\;$ for
 $|i-j|=1$,
\end{itemize}
 where $[x,y]_a=xy-ayx$, and $\phi_{i,s}^\pm$ are defined via the
 generating functions in indeterminate $u$ by
$$\Phi_i^\pm(u):={\ti\ttk}_i^{\pm 1}
\exp\bigl(\pm(\ttv-\ttv^{-1})\sum_{m\geq 1}\tth_{i,\pm m}u^{\pm
m}\bigr)=\sum_{s\geq 0}\phi_{i,\pm s}^\pm u^{\pm s}$$ with
$\ti\ttk_i=\ttk_i\ttk_{i+1}^{-1}$ ($\ttk_{n+1}=\ttk_1$) and $\tth_{i,\pm
m}=\ttv^{\pm(i-1)m}\ttg_{i,\pm m}-\ttv^{\pm(i+1)m}\ttg_{i+1,\pm
m}\,(1\leq i<n).$
\end{Def}

For $1\leq j<n$, let $E_j=\ttx^+_{j,0}$
and $F_j=\ttx^-_{j,0}$ and let
\begin{equation*}
\begin{split}
E_n&= \ttv[\ttx_{n-1,0}^-,[\ttx_{n-2,0}^-,\cdots,
[\ttx_{2,0}^-,\ttx_{1,1}^-]_{\ttv^{-1}}\cdots
]_{\ttv^{-1}}]_{\ttv^{-1}} \ti\ttk_n,\\
F_n&=\ttv^{-1}\ti\ttk_n^{-1}[\cdots[[\ttx_{1,-1}^+,\ttx_{2,0}^+]_\ttv,\ttx_{3,0}^+]_\ttv,
 \cdots,\ttx_{n-1,0}^+]_\ttv.
\end{split}
\end{equation*}
For $s\geq 1$ let
$\sfz^\pm_s=
 s\ttv^{\pm s} \frac1{[s]_\ttv}(\ttg_{1,\pm s}+\cdots+\ttg_{n,\pm s})$.
Then the algebra $\afUglv$ is generated by $E_i$, $F_i$, $\ttk_i^{\pm 1}$ and
$\sfz^\pm_s$ for $1\leq i\leq n$ and $s\geq 1$. Furthermore,
the algebra $\afUglv$ is a Hopf algebra with
comultiplication $\Dt$, counit $\ep$, and antipode $\sg$ defined
by
\begin{eqnarray*}\label{Hopf}
&\Delta(E_i)=E_i\otimes\ti \ttk_i+1\otimes
E_i,\quad\Delta(F_i)=F_i\otimes
1+\ti \ttk_i^{-1}\otimes F_i,&\\
&\Delta(\ttk^{\pm 1}_i)=\ttk^{\pm 1}_i\otimes \ttk^{\pm 1}_i,\quad
\Delta(\sfz_s^\pm)=\sfz_s^\pm\otimes1+1\otimes
\sfz_s^\pm;&\\
&\ep(E_i)=\ep(F_i)=0=\ep(\sfz_s^\pm),
\quad \ep(\ttk_i)=1;&\\
&\sg(E_i)=-E_i\ti\ttk_i^{-1},\quad \sg(F_i)=-\ti\ttk_iF_i,\quad
\sg(\ttk^{\pm 1}_i)=\ttk^{\mp 1}_i,&\\
&\text{and}\;\;\sg(\sfz_s^\pm)=-\sfz_s^\pm,&
\end{eqnarray*}
where $1\leq i\leq n$ and $s\in \mbz^+$ (cf. \cite[4.7]{Be}, \cite[2.2]{Hub2} and \cite[2.3.5 and 4.4.1]{DDF}).

Let $\afUslv$ be
the subalgebra of $\afUglv$ generated by all $\ttx^\pm_{i,s}$, $\ti\ttk_i^{\pm1}$ and $\tth_{i,t}$. Then $\afUslv$ is quantum affine $\frak{sl}_n$ (cf. \cite{Be}).
Let $\afUnv$ be the subalgebra of $\afUglv$ generated by all $\ttx^\pm_{i,s}$, $\ttk_i^{\pm1}$ and $\tth_{i,t}$. The algebra $\afUnv$ is the quantum affine algebra considered by Lusztig in \cite{Lu99}.

For $1\leq i\leq n$ and $s\in\mbz$, define the elements $\ms
Q_{i,s}\in\afUglv$ through the generating functions
\begin{equation*}
\begin{split}
&\quad\qquad\ms Q_i^\pm(u):=\exp\bigg(-\sum_{t\geq
1}\frac{1}{[t]_\ttv}g_{i,\pm t} (\ttv u)^{\pm t}\bigg)=\sum_{s\geq
0}\ms Q_{i,\pm s} u^{\pm s}\in\afUglv[[u,u^{-1}]].
\end{split}
\end{equation*}
Similarly, for $1\leq j\leq n-1$ and $s\in\mbz$, define the elements $\ms
P_{j,s}\in\afUslv$ through the generating functions
\begin{equation*}
\begin{split}
& \ms P_j^\pm(u):=\exp\bigg(-\sum_{t\geq
1}\frac{1}{[t]_\ttv}\tth_{j,\pm t} (\ttv u)^{\pm
t}\bigg)=\sum_{s\geq 0}\ms P_{j,\pm s} u^{\pm
s}\in\afUslv[[u,u^{-1}]].
\end{split}
\end{equation*}
By definition we have
\begin{equation*}
\begin{split}
& \ms P_j^\pm(u)=\frac{\ms Q_j^\pm(u\ttv^{j-1})}{\ms
Q_{j+1}^\pm(u\ttv^{j+1})},
\end{split}
\end{equation*}
for $1\leq j\leq n-1$.

Let $\mpZ=\mbc[v,v^{-1}]$. Following \cite{CP97}, let $\afUslZ$ be the $\mpZ$-subalgebra of $\afUslv$ generated by $(\ttx_{i,s}^\pm)^{(m)}$, $\ti\ttk_i^{\pm 1}$,
$\big[{\ti\ttk_i;0\atop t}\big]$, $\ms P_{i,s}$ ($1\leq i\leq n-1$, $m,t\in\mbn$, $s\in\mbz$), where
$$(\ttx_{i,s}^\pm)^{(m)}=\frac{(\ttx_{i,s}^\pm)^m}{[m]^!}
\text{ and }
\bigg[ {\ti\ttk_i;0 \atop t} \bigg] =
\prod_{s=1}^t \frac
{\ti\ttk_iv^{-s+1}-\ti\ttk_i^{-1}v^{s-1}}
{v^s-v^{-s}}.$$
Let $\afUnZ$ be the $\mpZ$-subalgebra of $\afUnv$ generated by $(\ttx_{i,s}^\pm)^{(m)}$, $\ttk_j^{\pm 1}$,
$\big[{\ttk_j;0\atop t}\big]$, $\ms P_{i,s}$ ($1\leq i\leq n-1$, $1\leq j\leq n$, $m,t\in\mbn$, $s\in\mbz$), where
$$\bigg[ {\ttk_j;0 \atop t} \bigg] =
\prod_{s=1}^t \frac
{\ttk_jv^{-s+1}-\ttk_j^{-1}v^{s-1}}
{v^s-v^{-s}}.$$

\subsection{Affine quantum Schur algebras}

We now recall the definition of affine quantum Schur algebras.
The extended affine Hecke algebra $\afHrZ$ is defined to be the $\mpZ$-algebra generated by
$$T_i,\quad X_j^{\pm 1}(\text{$1\leq i\leq r-1$, $1\leq j\leq r$}),$$
 and relations
$$\aligned
 & (T_i+1)(T_i-\ttv^2)=0,\\
 & T_iT_{i+1}T_i=T_{i+1}T_iT_{i+1},\;\;T_iT_j=T_jT_i\;(|i-j|>1),\\
 & X_iX_i^{-1}=1=X_i^{-1}X_i,\;\; X_iX_j=X_jX_i,\\
 & T_iX_iT_i=\ttv^2 X_{i+1},\;\;  X_jT_i=T_iX_j\;(j\not=i,i+1).
\endaligned$$

Let $\fSr$ be the symmetric group with generators $s_i:=(i,i+1)$ for $1\leq i\leq r-1$.
Let $I(n,r)=\{(i_1,\ldots,i_r)\in\mbz^r\mid 1\leq i_k\leq
n,\,\forall k\}.$
The symmetric group $\fSr$ acts
on the set $I(n,r)$ by place permutation:
\begin{equation*}\label{place permutation}
\bfi w=(i_{w(k)})_{k\in\mbz},\quad\text{
for $\bfi\in I(n,r)$ and $w\in\fSr$.}
\end{equation*}

Let $\OgZ$ be the free $\mpZ$-module with basis $\{\og_i\mid i\in\mathbb Z\}$.  For
 $\bfi=(i_1,\ldots,i_r)\in\mbz^r$, write
$$\og_\bfi=\og_{i_1}\ot\og_{i_2}\ot\cdots\ot \og_{i_r}=\og_{i_1}\og_{i_2}\cdots \og_{i_r}\in\OgZ^{\ot r}.$$
The tensor space $\OgZ^{\ot r}$
admits a right $\afHrZ$-module structure defined by
\begin{equation*}\label{afH action}
\begin{cases}
\og_{\bf i}\cdot X_t^{-1}
=\og_{i_1}\cdots\og_{i_{t-1}}\og_{i_t+n}\og_{i_{t+1}}\cdots\og_{i_r},\qquad \text{ for all }\bfi\in \mbz^r;\\
{\og_{\bf i}\cdot T_k=\left\{\begin{array}{ll} \ttv^2\og_{\bf
i},\;\;&\text{if $i_k=i_{k+1}$;}\\
\ttv\og_{\bfi s_k},\;\;&\text{if $i_k<i_{k+1}$;}\qquad\text{ for all }\bfi\in I(n,r),\\
\ttv\og_{\bfi s_k}+(\ttv^2-1)\og_{\bf i},\;\;&\text{if
$i_{k+1}<i_k$,}
\end{array}\right.}
\end{cases}
\end{equation*}
where $1\leq k\leq r-1$ and $1\le t\le r$ (cf. \cite{VV99}).
The algebra $$\afSrZ:=\End_{\afHrZ}(\OgZ^{\ot r})$$
is called an affine quantum Schur algebra.

Let $\Ogv=\Og_\mpZ\ot_{\mpZ}\mbc(v)$, $\afHrv=\afHrZ\ot_\mpZ\mbc(v)$ and $\afSrv=\afSrZ\ot_\mpZ\mbc(v)$.
The right action of $\afHrZ$ on $\OgZ^{\ot r}$ extends to a right
action of $\afHrv$ on $\Ogv^{\ot r}$.
Then we have the $\mbc(v)$-algebra isomorphism
$$\afSrv\cong \End_{\afHrv}(\Ogv^{\ot
r}).
$$

Let $\vep\in\mbc$ be a primitive $l'$-th root of $1$. We make  $\mbc$ into a $\mpZ$-module by specializing $v$ to $\vep$.
Let $\Ogvep=\Og_\mpZ\ot_{\mpZ}\mbc$, $\afHrvep=\afHrZ\ot_\mpZ\mbc$ and $\afSrvep=\afSrZ\ot_\mpZ\mbc$. The right action of $\afHrZ$ on $\OgZ^{\ot r}$ induces a right
action of $\afHrvep$ on $\Ogvep^{\ot r}$.
Then we have the $\mbc$-algebra isomorphism
$$\afSrvep\cong \End_{\afHrvep}(\Ogvep^{\ot
r}).
$$

\subsection{The algebras $\afUnrv$ and $\afUnrvep$}

The algebras $\afUglv$ and  $\afSrv$ can be related by a surjective algebra homomorphism $\zeta_r$, which we now describe.
The vector space $\Ogv$ is a $\afUglv$-module with the action
\begin{equation*}
\aligned
E_i\cdot \og_s&=\dt_{\ol{i+1},\bar s}\og_{s-1},\quad F_i\cdot \og_s=\dt_{\bar i,\bar
s}\og_{s+1},\quad
\ttk_i^{\pm 1}\cdot \og_s=\ttv^{\pm\dt_{\bar i,\bar s}}\og_s,\\
&\sfz_t^+\cdot\og_s=\og_{s-tn},\quad\text{and }\;\;
\sfz_t^-\cdot\og_s=\og_{s+tn},
\endaligned
\end{equation*}
where $\bar i$ denotes the corresponding integer modulo $n$.
The Hopf algebra structure of $\afUglv$ induces a $\afUglv$-module $\OgC^{\ot r}$. According to \cite[3.5.5 and 4.4.1]{DDF}, the actions of $\afUglv$ and $\afHrv$ on $\Ogv^{\ot r}$ are commute.  Consequently, there is an algebra homomorphism
\begin{equation}\label{zetar}
\zeta_r:\afUglv\ra\afSrv
\end{equation}
It is proved in \cite[3.8.1]{DDF} that $\zr$ is surjective.
Every $\afSrv$-module will be inflated into a $\afUglv$-module
via $\zeta_r$.

Let $\afUnrv=\zeta_r(\afUnv)$, $\afUnrZ=\zeta_r(\afUnZ)$, $\afUnvep=\afUnZ\ot_\mpZ\mbc$ and $\afUnrvep=\afUnrZ\ot_\mpZ\mbc$.
According to \cite[8.2]{Lu99}
the equality $\afUnrZ=\afSrZ$ holds if and only if  $n>r$.
The intersection
cohomology basis for $\afUnrZ$ was constructed in [loc. cit.] and the presentation of $\afUnrv$ was obtained in \cite{DGr,Mc07}.
By restriction, the map $\zeta_r$ induces a surjective map $\zeta_r:\afUnZ\ra\afUnrZ$. Thus, base change induces a surjective algebra homomorphism
\begin{equation}\label{zetarvep}
\zeta_{r,\vep}:=\zeta_r\ot 1:\afUnvep\ra\afUnrvep.
\end{equation}
Every $\afUnrvep$-module will be inflated into a $\afUnvep$-module via $\zeta_{r,\vep}$.
Let $$\afUslvep=\afUslZ\ot_\mpZ\mbc.$$ Then by \cite[9.3]{DFW} we have
\begin{equation}\label{image of afuslvep}
\afUnrZ=\zeta_r(\afUslZ)  \text{ and }\afUnrvep=\zeta_{r,\vep}(\afUslvep).
\end{equation}
If $x$ is any element of $\afUslZ$ (resp. $\afSrZ$), we denote the corresponding element of  $\afUslvep$ (resp. $\afSrvep$) also by $x$.

\section{The functor $\sGvep$}
For any algebra $B$, the category of all finite dimensional
left $B$-modules will be denoted by $B\hmod$.  In this section we will construct a functor $\sGvep:\afSNrvep\hmod\ra\afSrvep\hmod$ and generalize \cite[6.5(f)]{Gr80} to the affine case in Proposition \ref{prop1 of sGvep} and Proposition \ref{prop2 of sGvep}, which will be used in \S 4.

\subsection{The functors $\sG$ and $\sGvep$}

For $1\leq i\leq n$ and $t\in\mbn$ let $\bffkk_i=\zeta_r(\ttk_i)$ and $\big[{\bffkk_i;0\atop t}\big]=\zeta_r(\big[{\ttk_i;0\atop t}\big])$ . Let $\Lanr=\{\la\in\mbnn\mid\sum_{1\leq i\leq n}\la_i=r\}$. For $\la\in\Lanr$ let $\bfone_\la=\big[{\bffkk_1;0\atop \la_1}\big]
\cdots\big[{\bffkk_n;0\atop \la_n}\big]$. According to \cite[3.7.4]{DDF}
we have
\begin{equation}\label{bfone}
1=\sum_{\la\in\Lanr}\bfone_\la,\quad\bfone_\la\bfone_\mu=\dt_{\la,\mu}
\bfone_\la,\quad\bffkk_i=\sum_{\la\in\Lanr}v^{\la_i}\bfone_\la.
\end{equation}

Assume $N\geq n$.
For $\mu\in\Lanr$ let $$\ti\mu=(\mu_1,\cdots,\mu_n,0,\cdots,0)\in\La(N,r).$$
Let $$e=\sum_{\mu\in\Lanr}\bfone_{\timu}\in\afSNrZ.$$ Then $e\afSNrv e\cong\afSrv $.
Consequently, we may identify $e\afSNrv e\hmod$ with $\afSrv\hmod$. With this identification, we
define a functor
\begin{equation*}
\sG=\sG_{N,n}:\afSNrv\hmod\lra\afSrv\hmod,\qquad
V\longmapsto eV.
\end{equation*}
Similarly, we define the functor $\sGvep:\afSNrvep\hmod\lra\afSrvep\hmod$ as follows. We shall denote the image of $e$ in $\afSNrvep$ by the same letter.
Since $e\afSNrvep e\cong\afSrvep $, we may identify $e\afSNrvep e\hmod$ with $\afSrvep\hmod$. Therefore, we may define a functor
\begin{equation*}
\sGvep=\sG_{N,n,\vep}:\afSNrvep\hmod\lra\afSrvep\hmod,\qquad
V\longmapsto eV.
\end{equation*}

\subsection{Properties of the functor $\sG$}

We now recall a result about $\sG$ established in \cite{Fu12}.
Let $\Pi_v$ be the set of polynomials $Q(u)\in\mbcv[u]$ such that the constant term of $Q(u)$ is $1$.
Following \cite{FM}, an $n$-tuple of polynomials
$\bfQ=(Q_1(u),\ldots,Q_n(u))$ with $Q_i(u)\in\Pi_v$ is called {\it
dominant} if
$Q_i(\ttv^{i-1}u)/Q_{i+1}(\ttv^{i+1}u)\in\mbcv[u]$ for $1\leq i\leq n-1$. Let $\Qnv$ be the set of dominant $n$-tuples of polynomials.
Let $$\Qn=\{\bfQ\in\Qnv\mid Q_n(uv^{n-1})\in\mbc[u],\,Q_i(\ttv^{i-1}u)/Q_{i+1}(\ttv^{i+1}u)\in\mbc[u],\,\text{for}\,1\leq i\leq n-1\}.$$
For $\bfQ\in\Qn$ let $\deg\bfQ=(\deg Q_1(u),\cdots,\deg Q_n(u))$.

For $g(u)=\prod_{1\leq i\leq m}(1-a_iu)\in\mbcv[u]$
with constant term $1$ and $a_i\in\mbcv^*$, define
\begin{equation}\label{f^pm(u)}
g^\pm(u)=\prod_{1\leq i\leq m}(1-a_i^{\pm1}u^{\pm1}).
\end{equation}
For $\bfQ=(Q_1(u),\ldots,Q_{n}(u))\in\Qn$, define
$Q_{i,s}\in\mbc$, for $1\leq i\leq n$ and $s\in\mbz$, by the
following formula
$$Q_i^\pm(u)=\sum_{s\geq 0}Q_{i,\pm s}u^{\pm s},$$
where $Q_i^\pm(u)$ is defined using \eqref{f^pm(u)}. Let $I(\bfQ)$
be the left ideal of $\afUglv$ generated by $\ttx_{j,s}^+ ,\quad\ms
Q_{i,s}-Q_{i,s},$ and $\ttk_i-\ttv^{\la_i}$, for $1\leq j\leq n-1$,
$1\leq i\leq n$ and $s\in\mbz$, where $\la_i=\mathrm{deg}Q_i(u)$,
and define
$$M(\bfQ)=\afUglv/I(\bfQ).$$
Then $M(\bfQ)$ has a unique irreducible quotient, denoted by $L(\bfQ)$. By \cite[4.3]{FM}, $L(\bfQ)$ is finite dimensional.
Let $$\Qnr=\bigg\{\bfQ\in\Qn\,\big|\,\sum_{1\leq i\leq n}\deg Q_i(u)=r\bigg\}.$$ Then by \cite[4.6.8]{DDF} $L(\bfQ)$ can be regarded as an irreducible module $\afSrv$ via the map $\zeta_r$ given in \eqref{zetar} for $\bfQ\in\Qnr$.

Assume $N\geq n$. We define an injective map $\bfQ\mapsto\ti\bfQ$ of $\Qnr$ into $\QNr$ as follows.
For $\bfQ=(Q_1(u),\cdots,Q_n(u))\in\Qnr$ we define
\begin{equation}\label{tibfQ}
\ti\bfQ=
(Q_1(u),\cdots,Q_n(u),1,\cdots,1)\in\QNr.
\end{equation}
Then the image of $\Qnr$ under this map is the set
$$\tiQnr=\{\ti\bfQ\mid\bfQ\in\Qnr\}.$$
The following result is established in \cite[4.11]{Fu12}.
\begin{Prop}\label{G(L(tibfQ))}
Assume $N\geq n$. Then  $\sG(L(\tibfQ))\cong L(\bfQ)$ for $\bfQ\in\Qnr$. Furthermore, for $\bfQ'\in\QNr$,  $\sG(L(\bfQ'))\not=0$ if and only if  $\bfQ'\in\tiQnr$.
\end{Prop}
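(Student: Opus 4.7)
The plan is to prove both assertions by combining a weight-dominance argument, the standard Schur-functor irreducibility principle, and an identification of Drinfeld data for the highest weight vector. The two statements stem from a common weight calculation, while the identification rests on a compatibility between the maps $\zr$ for different ranks.

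For the weight argument, $L(\bfQ')$ is the irreducible quotient of $M(\bfQ')=U_v(\widehat{\frak{gl}}_N)/I(\bfQ')$ and is therefore spanned by vectors obtained from the highest weight vector $v_{\bfQ'}$ by applying products of the negative Drinfeld generators $\ttx^-_{j,s}$ ($1\le j<N$, $s\in\mbz$). Each such generator carries $\ttk$-weight $-\al_j=-(\ep_j-\ep_{j+1})$, so every $\ttk$-weight $\mu\in\LaNr$ occurring in $L(\bfQ')$ takes the form $\mu=\la'-\sum_{j=1}^{N-1}n_j\al_j$ with $n_j\in\mbn$, where $\la'=(\deg Q'_1,\ldots,\deg Q'_N)$. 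A direct computation yields
\begin{equation*}
\sum_{i>k}\mu_i=\sum_{i>k}\la'_i+n_k\ge\sum_{i>k}\la'_i
\end{equation*}
for every $k$. Hence if $\bfQ'\notin\tiQnr$, so that $\la'_i>0$ for some $i>n$, then every weight $\mu$ of $L(\bfQ')$ has $\sum_{i>n}\mu_i>0$ and in particular $\mu\neq\timu_0$ for any $\mu_0\in\Lanr$. Thus $\bfone_{\timu_0}L(\bfQ')=0$ for all $\mu_0$, so $\sG(L(\bfQ'))=eL(\bfQ')=0$.

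If instead $\bfQ'=\tibfQ\in\tiQnr$, the highest weight $\la'=\tila$ is of the form $\timu_0$ with $\mu_0=\la\in\Lanr$, so $\bfone_{\tila}$ is a summand of $e$ and $ev_{\tibfQ}=v_{\tibfQ}\neq 0$; hence $\sG(L(\tibfQ))\neq 0$. Next I apply the standard truncation principle: for any irreducible $\afSNrv$-module $V$ with $eV\neq 0$, the space $eV$ is irreducible over $e\afSNrv e\cong\afSrv$. Indeed, a nonzero $e\afSNrv e$-submodule $W\sse eV$ satisfies $\afSNrv W=V$ by irreducibility, and then $eV=e\afSNrv W=(e\afSNrv e)W=W$, using $W=eW$. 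Consequently $eL(\tibfQ)\cong L(\bfQ'')$ for some $\bfQ''\in\Qnr$.

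The final, and main, task is to prove $\bfQ''=\bfQ$. I would verify that $v_{\tibfQ}$, viewed inside $eL(\tibfQ)$ via $\afSrv\cong e\afSNrv e$, satisfies the defining relations of the highest weight vector of $L(\bfQ)$: namely
\begin{equation*}
\zr(\ttx^+_{j,s})v_{\tibfQ}=0,\quad \zr(\ttk_i)v_{\tibfQ}=v^{\la_i}v_{\tibfQ},\quad \zr(\ms Q_{i,s})v_{\tibfQ}=Q_{i,s}v_{\tibfQ}
\end{equation*}
for $1\le j<n$, $1\le i\le n$, $s\in\mbz$. Each identity reduces, under the identification $\afSrv\cong e\afSNrv e$, to the known action of the corresponding Drinfeld element of $U_v(\widehat{\frak{gl}}_N)$ on $v_{\tibfQ}$ inside $L(\tibfQ)$, which produces the asserted eigenvalues because $\tibfQ$ extends $\bfQ$ by $1$'s. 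The crux is therefore a compatibility between the two $\zr$-maps attached to the distinct quantum affine algebras $\afUglv$ and $U_v(\widehat{\frak{gl}}_N)$: one must show that $\zr(x)\in\afSrv$ corresponds, under $\afSrv\cong e\afSNrv e$, to $e\zr(\ti x)e$ for a suitable lift $\ti x\in U_v(\widehat{\frak{gl}}_N)$ of each relevant Drinfeld generator $x$. This is the only nontrivial technical input, and relies on stabilization properties of affine quantum Schur algebras from \cite{DDF}; once in hand, $v_{\tibfQ}$ generates the irreducible quotient of $M(\bfQ)$ inside the irreducible module $eL(\tibfQ)$, forcing $eL(\tibfQ)\cong L(\bfQ)$.
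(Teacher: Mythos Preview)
The paper does not supply its own proof of this proposition; it simply records that the result is established in \cite[4.11]{Fu12}. So there is no internal argument to compare against, only the question of whether your outline could stand on its own.

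Your treatment of the second assertion is essentially complete and correct: the weight-dominance computation showing that every weight $\mu$ of $L(\bfQ')$ satisfies $\sum_{i>n}\mu_i\ge\sum_{i>n}\la'_i$ is exactly the mechanism the paper itself uses later (in the proof of Proposition~\ref{prop1 of sGvep}) for the specialized modules $V_\vep(\bfQ)$. The ``if'' direction and the irreducibility of $eL(\tibfQ)$ via \cite[6.2(b)]{Gr80} are also fine.

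The gap is precisely where you say it is. To conclude $eL(\tibfQ)\cong L(\bfQ)$ you need that, under the identification $\afSrv\cong e\afSNrv e$, the elements $\zr(\ttx^+_{j,s})$, $\zr(\ttk_i)$, $\zr(\ms Q_{i,s})$ coming from $\afUglv$ match $e\zr(\ti x)e$ for the corresponding generators $\ti x$ of $U_v(\widehat{\frak{gl}}_N)$. You assert this compatibility and attribute it to ``stabilization properties of affine quantum Schur algebras from \cite{DDF}'', but you do not prove it, and it is not a formality: the two homomorphisms $\zr$ are defined via actions on different tensor spaces $\Og_{n,v}^{\otimes r}$ and $\Og_{N,v}^{\otimes r}$, and one must verify that the identification of $e\Og_{N,v}^{\otimes r}$ with $\Og_{n,v}^{\otimes r}$ as $\afHrv$-modules intertwines the relevant Drinfeld generators (in particular the $\ms Q_{i,s}$, which are not among the Chevalley-type generators). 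This compatibility is in fact the substantive content of \cite[4.11]{Fu12}; without supplying it, your argument for the first isomorphism remains a plan rather than a proof.
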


\subsection{The irreducible $\afSrvep$-module $V_\vep(\bfQ)$}

For each $\bfQ\in\Qnr$ we will use $L(\bfQ)$ to construct an irreducible $\afSrvep$-module $V_\vep(\bfQ)$ as below.
For $\la,\mu\in\Lanr$ write $\mu\unlhd\la$ if $\sum_{1\leq j\leq i}\mu_j\leq \sum_{1\leq j\leq i}\la_j$ for $1\leq i\leq n$.
By \eqref{bfone} for $\bfQ\in\Qnr$ we have
$$L(\bfQ)=\bop_{\mu\in\Lanr\atop\mu\unlhd\la}L(\bfQ)_\mu$$
where $L(\bfQ)_\mu=\bfone_\mu L(\bfQ)$ and $\la=\deg\bfQ$.
Clearly, $\dim L(\bfQ)_\la=1$. We choose a nonzero vector $w_\bfQ$ in $L(\bfQ)_\la$ and let $L_\mpZ(\bfQ)=\afSrZ w_\bfQ$.
Then $L_\mpZ(\bfQ)=\oplus_{\mu\in\Lanr}L_\mpZ(\bfQ)_\mu$, where $L_\mpZ(\bfQ)_\mu=\bfone_\mu L_\mpZ(\bfQ)$.

\begin{Lem}\label{G(LZ(tibfQ))}
Assume $N\geq n$. Then $\sG(L_\mpZ(\tibfQ))\cong L_\mpZ(\bfQ)$ for $\bfQ\in\Qnr$.
\end{Lem}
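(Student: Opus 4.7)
The plan is to transport the defining equality $L_\mpZ(\bfQ)=\afSrZ w_\bfQ$ across the isomorphism $\sG(L(\tibfQ))\cong L(\bfQ)$ supplied by Proposition \ref{G(L(tibfQ))}.

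First I would verify the weight-space identity $ew_{\tibfQ}=w_{\tibfQ}$. Writing $\la=\deg\bfQ$, the vector $w_\bfQ$ spans the one-dimensional weight space $L(\bfQ)_\la=\bfone_\la L(\bfQ)$, and similarly $w_{\tibfQ}$ spans $L(\tibfQ)_{\ti\la}=\bfone_{\ti\la}L(\tibfQ)$. Since $\ti\la$ is one of the compositions appearing in $e=\sum_{\mu\in\Lanr}\bfone_{\ti\mu}$ and the $\bfone_{\ti\mu}$ are pairwise orthogonal idempotents, this yields $ew_{\tibfQ}=\bfone_{\ti\la}w_{\tibfQ}=w_{\tibfQ}$.

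Next, fix the isomorphism $\phi\colon\sG(L(\tibfQ))=eL(\tibfQ)\tong L(\bfQ)$ of $\afSrv$-modules from Proposition \ref{G(L(tibfQ))}, where the $\afSrv$-action on the left-hand side is via the identification $e\afSNrv e\cong\afSrv$. Since $\phi$ intertwines weight spaces, $\phi(w_{\tibfQ})$ lies in the one-dimensional space $L(\bfQ)_\la$, so after rescaling $w_\bfQ$ (a choice that does not affect the definition of $L_\mpZ(\bfQ)$) we may assume $\phi(w_{\tibfQ})=w_\bfQ$. The key technical input is the integral refinement $e\afSNrZ e=\afSrZ$ of the rational isomorphism $e\afSNrv e\cong\afSrv$. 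I would deduce this from the standard BLM-type $\mpZ$-basis of the affine quantum Schur algebras indexed by aperiodic matrices: the basis elements of $\afSNrZ$ whose row and column sums both lie in the image of $\Lanr\hookrightarrow\La(N,r)$ are cut out precisely by the truncation $e(\cdot)e$ and are in bijection with the corresponding basis of $\afSrZ$.

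Once these ingredients are in place the lemma follows by a direct computation:
\begin{equation*}
\sG(L_\mpZ(\tibfQ))=eL_\mpZ(\tibfQ)=e\afSNrZ w_{\tibfQ}=(e\afSNrZ e)\cdot w_{\tibfQ}=\afSrZ\cdot w_{\tibfQ},
\end{equation*}
where the third equality uses $ew_{\tibfQ}=w_{\tibfQ}$ and the last uses the integral identification. Applying $\phi$ then carries this submodule onto $\afSrZ\cdot w_\bfQ=L_\mpZ(\bfQ)$, which is the required isomorphism. The main obstacle will be justifying the integral statement $e\afSNrZ e=\afSrZ$ (as opposed to merely the generic version), since the argument for the lemma itself is otherwise a formal manipulation of cyclic vectors and weight idempotents.
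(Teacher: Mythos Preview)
Your proof is correct and essentially identical to the paper's: both use the isomorphism from Proposition~\ref{G(L(tibfQ))}, verify $ew_{\tibfQ}=w_{\tibfQ}$, observe that the highest-weight vector is carried to a nonzero scalar multiple of $w_{\bfQ}$, and then compute $e\,\afSNrZ\,e\cdot w_{\tibfQ}\cong\afSrZ\,w_{\bfQ}$. The integral identity $e\,\afSNrZ\,e\cong\afSrZ$ that you single out as the main obstacle is used implicitly by the paper as well and is indeed a standard consequence of the $\mpZ$-basis description of the affine quantum Schur algebras, so there is no gap.
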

\begin{proof}
By Proposition \ref{G(L(tibfQ))} there exist  an $\afSrv$-algebra isomorphism $f:eL(\tibfQ)\ra L(\bfQ)$.  Let $\la=\deg\bfQ$. By \eqref{bfone} we have
$f(eL(\tibfQ)_{\tila})=f(\bfone_{\ti\la}eL(\tibfQ))=\bfone_{\la}f(eL(\tibfQ))=L(\bfQ)_\la$. This, together with the fact that $\dim L(\bfQ)_\la=\dim L(\tibfQ)_{\ti\la}=1$, implies that there exist $k\not=0\in\mbc(v)$ such that  $f(ew_\tibfQ)=kw_\bfQ$. Furthermore  we have $ew_\tibfQ=e\bfone_{\ti\la} w_\tibfQ=w_\tibfQ$ since $w_\tibfQ\in L(\tibfQ)_{\ti\la}$. Consequently,
$\sG(L_\mpZ(\tibfQ))\cong f(eL_\mpZ(\tibfQ))
=f(e\afSNrZ ew_\tibfQ)=\afSrZ f(w_\tibfQ)=\afSrZ (kw_\bfQ)\cong L_\mpZ(\bfQ)$.
\end{proof}

\begin{Lem}\label{free}
For $\bfQ\in\Qnr$, $L_\mpZ(\bfQ)$ is a free $\mpZ$-module.
\end{Lem}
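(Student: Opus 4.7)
Since $\mpZ=\mbc[v,v^{-1}]$ is a principal ideal domain, a $\mpZ$-module is free if and only if it is finitely generated and torsion-free, so the plan is to verify both properties for $L_\mpZ(\bfQ)$.

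Torsion-freeness is immediate: $L_\mpZ(\bfQ)=\afSrZ w_\bfQ$ is by construction a $\mpZ$-submodule of the $\mbc(v)$-vector space $L(\bfQ)$, and any $\mpZ$-submodule of a $\mbc(v)$-vector space is torsion-free.

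For finite generation, my plan is to reduce to a large-$N$ situation via Lemma \ref{G(LZ(tibfQ))}. Fix $N$ with $N\geq n$ and $N>r$. Then Lemma \ref{G(LZ(tibfQ))} gives $L_\mpZ(\bfQ)\cong eL_\mpZ(\tibfQ)$, realized as a $\mpZ$-submodule of $L_\mpZ(\tibfQ)$. Because $\mpZ$ is Noetherian, it is therefore enough to show that $L_\mpZ(\tibfQ)$ itself is finitely generated over $\mpZ$. For $N>r$ the statement recalled from \cite[8.2]{Lu99} gives $U_\vtg(N,r)_\mpZ={\mathcal S}_\vtg(N,r)_\mpZ$, whence
$$L_\mpZ(\tibfQ)={\mathcal S}_\vtg(N,r)_\mpZ\cdot w_\tibfQ=U_\vtg(N,r)_\mpZ\cdot w_\tibfQ=\zeta_r\bigl(U_\vtg(N)_\mpZ\bigr)\cdot w_\tibfQ.$$
This identifies $L_\mpZ(\tibfQ)$ with the integral highest-weight lattice $U_\vtg(N)_\mpZ\cdot w_\tibfQ$ inside the finite-dimensional $U_\vtg(N)_v$-module $L(\tibfQ)$. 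Standard Chari--Pressley integrality for finite-dimensional highest-weight modules over quantum affine algebras (combined with the triangular decomposition of $U_\vtg(N)_\mpZ$ and the finiteness of the set of weights of $L(\tibfQ)$) then yields that this lattice is finitely generated over $\mpZ$.

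Combining the two steps, $L_\mpZ(\bfQ)$ is a finitely generated torsion-free module over the PID $\mpZ$, hence free. The main obstacle in this approach is the finite generation of the integral highest-weight lattice $U_\vtg(N)_\mpZ\cdot w_\tibfQ$; this is a nontrivial input coming from the integrality theory of affine quantum groups rather than from the Schur side directly, so the reduction via Lemma \ref{G(LZ(tibfQ))} is what lets the PID argument be brought to bear.
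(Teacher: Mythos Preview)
Your proof is correct and follows essentially the same route as the paper: reduce via Lemma~\ref{G(LZ(tibfQ))} to the case $N>r$ where $\afSNrZ=U_\vtg(N,r)_\mpZ$, invoke the Chari--Pressley/Frenkel--Mukhin integrality results (\cite[8.2]{CP97}, \cite[2.5]{FM2}) to control the lattice in that case, and then use that $\mpZ$ is a PID. The only cosmetic difference is that the paper cites freeness of $\afUslZ w_{\tibfQ}$ directly and then passes to submodules, whereas you separate torsion-freeness from finite generation; the substance is the same.
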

\begin{proof}
By the proof of \cite[8.2]{CP97} and \cite[2.5]{FM2} we see that $\afUslZ w_\bfQ$ is a free $\mpZ$-module.
If $n>r$ then $\afSrZ=\afUnrZ$ by \cite[8.2]{Lu99}. This shows that $L_\mpZ(\bfQ)=\afUnZ w_\bfQ=\afUslZm w_\bfQ=\afUslZ w_\bfQ$,
where  $\afUslZm$ is the $\mpZ$-subalgebra of $\afUslv$ generated by  $(\ttx_{i,s}^-)^{(m)}$ ($1\leq i\leq n-1$, $m\in\mbn$, $s\in\mbz$). Thus $L_\mpZ(\bfQ)$ is a free $\mpZ$-module. Now we assume $n\leq r$. We choose $N$ such that $N>r$.  Since $N>r$, $L_\mpZ(\tibfQ)$ is a free $\mpZ$-module. This implies that $\sG(L_\mpZ(\tibfQ))$ is a free $\mpZ$-submodule of $L_\mpZ(\tibfQ)$  since $\mpZ$ is a principal ideal domain. It follows from Lemma \ref{G(LZ(tibfQ))} that $L_\mpZ(\bfQ)$ is a free $\mpZ$-module.
\end{proof}

Let $L_\vep(\bfQ)=L_\mpZ(\bfQ)\ot_\mpZ\mbc$. Then $L_\vep(\bfQ)=\bop_{\mu\in\Lanr}L_\vep(\bfQ)_\mu$, where $L_\vep(\bfQ)_\mu=\bfone_\mu L_\vep(\bfQ)$.
\begin{Coro}\label{Coro of free module}
For $\bfQ\in\Qnr$ and $\mu\in\Lanr$ we have $\dim_{\,\mbcv} L(\bfQ)=\dim_{\,\mbc} L_\vep(\bfQ)$ and $\dim_{\,\mbcv} L(\bfQ)_\mu=\dim_{\,\mbc} L_\vep(\bfQ)_\mu$.
\end{Coro}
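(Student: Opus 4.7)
The approach is to interpret both $L(\bfQ)$ and $L_\vep(\bfQ)$ as specializations of the common integral form $L_\mpZ(\bfQ)$, and then exploit the freeness guaranteed by Lemma \ref{free}. Once that is set up, both dimensions coincide with the $\mpZ$-rank of $L_\mpZ(\bfQ)$, and the weight-space version follows from the fact that the decomposition is already defined at the $\mpZ$-level.

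First I would verify that $L(\bfQ)\cong L_\mpZ(\bfQ)\otimes_\mpZ\mbc(v)$. Since $L_\mpZ(\bfQ)$ sits inside the $\mbc(v)$-vector space $L(\bfQ)$, it is $\mpZ$-torsion free, so the natural map $L_\mpZ(\bfQ)\otimes_\mpZ\mbc(v)\to L(\bfQ)$ is injective; its image is $\afSrv w_\bfQ$, which equals $L(\bfQ)$ by irreducibility of $L(\bfQ)$ together with $w_\bfQ\neq 0$. Combined with the definition $L_\vep(\bfQ)=L_\mpZ(\bfQ)\otimes_\mpZ\mbc$, the freeness of $L_\mpZ(\bfQ)$ then immediately gives
$$\dim_{\mbc(v)} L(\bfQ)=\mathrm{rank}_\mpZ L_\mpZ(\bfQ)=\dim_{\mbc} L_\vep(\bfQ).$$

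For the weight-space refinement, I would note that the idempotent $\bfone_\mu$ lies in $\afSrZ$ (by \eqref{bfone}), so $L_\mpZ(\bfQ)_\mu=\bfone_\mu L_\mpZ(\bfQ)$ is a direct $\mpZ$-summand of the free module $L_\mpZ(\bfQ)$, and is therefore itself free of some rank $r_\mu$. Because $\bfone_\mu$ commutes with base change to either $\mbc(v)$ or $\mbc$, applying it to the two isomorphisms above yields $L(\bfQ)_\mu\cong L_\mpZ(\bfQ)_\mu\otimes_\mpZ\mbc(v)$ and $L_\vep(\bfQ)_\mu\cong L_\mpZ(\bfQ)_\mu\otimes_\mpZ\mbc$, both of dimension $r_\mu$. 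There is essentially no obstacle: the entire content of the corollary is packaged in Lemma \ref{free}, and the only point requiring care is the identification of $L(\bfQ)$ with the base-changed integral form, which uses irreducibility of $L(\bfQ)$ to ensure that $\afSrv w_\bfQ$ exhausts $L(\bfQ)$.
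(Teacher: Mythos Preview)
Your argument is correct and follows essentially the same route as the paper: use Lemma~\ref{free} to identify both $L(\bfQ)$ and $L_\vep(\bfQ)$ as base changes of the free $\mpZ$-module $L_\mpZ(\bfQ)$, so that both dimensions equal its rank, and likewise for the weight spaces. You have simply spelled out in more detail the two points the paper leaves implicit, namely why $L_\mpZ(\bfQ)\otimes_\mpZ\mbc(v)\cong L(\bfQ)$ and why each $L_\mpZ(\bfQ)_\mu$ is itself free.
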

\begin{proof}
By Lemma \ref{free} we have $L(\bfQ)\cong L_\mpZ(\bfQ)\ot_\mpZ\mbcv$ and $L(\bfQ)_\mu\cong L_\mpZ(\bfQ)_\mu\ot_\mpZ\mbcv$. This  implies that
$\dim_{\,\mbcv} L(\bfQ)=\text{rank}_{\mpZ}L_\mpZ(\bfQ)=\dim_{\,\mbc} L_\vep(\bfQ)$ and $\dim_{\,\mbcv} L(\bfQ)_\mu=\text{rank}_{\mpZ}L_\mpZ(\bfQ)_\mu=\dim_{\,\mbc} L_\vep(\bfQ)_\mu$.
\end{proof}

Let $\bfQ\in\Qnr$. According to Corollary \ref{Coro of free module}, we have $\dim_{\,\mbc} L_\vep(\bfQ)_\la=\dim_{\,\mbcv} L (\bfQ)_\la=1$, where $\la=\deg\bfQ$. Thus $L_\vep(\bfQ)$ has a unique finite dimensional irreducible quotient $\afSrvep$-module, denoted by $V_\vep(\bfQ)$.

\subsection{Properties of the functor $\sGvep$}

We are now ready to prove in Proposition \ref{prop1 of sGvep} and Proposition \ref{prop2 of sGvep} that the functor $\sGvep$ enjoys similar properties as the functor $\sG$. These results is the affine version of \cite[(6.5f)]{Gr80}.

\begin{Prop}\label{prop1 of sGvep}
Assume $N\geq n$.
For $\bfQ\in\QNr$, $\sGvep( V_\vep(\bfQ))\not=0$ if and only if $\deg\bfQ\in\ti\La(n,r)$, where $\ti\La(n,r)=\{\ti\mu=(\mu_1,\cdots,\mu_n,0,\cdots,0)
\mid\mu\in\Lanr\}\han\La(N,r)$.
\end{Prop}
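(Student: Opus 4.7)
The plan is to recast $\sGvep(V_\vep(\bfQ))$ as the sum of weight spaces indexed by the embedded composition set $\ti\La(n,r)$, and then use the partial-order control on the weights of $V_\vep(\bfQ)$ furnished by Corollary~\ref{Coro of free module}.

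First I would unwind the definition of $\sGvep$. Since the $\bfone_\nu$ for $\nu\in\La(N,r)$ are orthogonal idempotents in $\afSNrvep$ summing to $1$, the module $V_\vep(\bfQ)$ decomposes as $\bigoplus_{\nu\in\La(N,r)}V_\vep(\bfQ)_\nu$ with $V_\vep(\bfQ)_\nu=\bfone_\nu V_\vep(\bfQ)$. Because $e=\sum_{\mu\in\Lanr}\bfone_{\timu}$, this yields
$$\sGvep(V_\vep(\bfQ))=eV_\vep(\bfQ)=\bop_{\mu\in\Lanr}V_\vep(\bfQ)_{\timu}.$$
Thus $\sGvep(V_\vep(\bfQ))\ne 0$ if and only if $V_\vep(\bfQ)_{\timu}\ne 0$ for some $\mu\in\Lanr$.

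For the ``if'' direction, suppose $\deg\bfQ=\ti\nu$ with $\nu\in\Lanr$. By construction $V_\vep(\bfQ)$ is the irreducible quotient of $L_\vep(\bfQ)$, and the highest-weight line $\mbc(w_\bfQ\ot 1)\sse L_\vep(\bfQ)_{\deg\bfQ}$ survives in the quotient; this is the content of the one-dimensionality $\dim_\mbc L_\vep(\bfQ)_{\deg\bfQ}=1$ coming from Corollary~\ref{Coro of free module}. Hence $V_\vep(\bfQ)_{\ti\nu}\ne 0$, so $\sGvep(V_\vep(\bfQ))\ne 0$.

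For the ``only if'' direction, the main step is to transfer the weight bound on $L(\bfQ)$ (the generic module, whose weights all satisfy $\unlhd\deg\bfQ$) to $V_\vep(\bfQ)$. By Lemma~\ref{free} each weight piece $L_\mpZ(\bfQ)_\alpha$ is a free $\mpZ$-module whose rank equals $\dim_{\mbcv}L(\bfQ)_\alpha$, so $L_\vep(\bfQ)_\alpha\ne 0$ only when $\alpha\unlhd\deg\bfQ$, and the same holds for the quotient $V_\vep(\bfQ)$. Now assume $V_\vep(\bfQ)_{\timu}\ne 0$ for some $\mu\in\Lanr$, and set $\la=\deg\bfQ\in\La(N,r)$. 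Then $\ti\mu\unlhd\la$; comparing partial sums at index $n$ gives
$$r=\sum_{j=1}^n\mu_j=\sum_{j=1}^n(\ti\mu)_j\le\sum_{j=1}^n\la_j\le\sum_{j=1}^N\la_j=r,$$
so equality holds throughout. Since each $\la_j\ge 0$, this forces $\la_{n+1}=\cdots=\la_N=0$, i.e.\ $\la\in\ti\La(n,r)$, as required.

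The only subtle point is the transfer of the weight decomposition from the generic $L(\bfQ)$ to the specialization $V_\vep(\bfQ)$; once one invokes Lemma~\ref{free} and Corollary~\ref{Coro of free module} this is immediate, and the remainder of the argument is the elementary partial-sum comparison above.
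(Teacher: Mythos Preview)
Your proof is correct and follows essentially the same route as the paper: decompose $eV_\vep(\bfQ)$ into weight spaces indexed by $\ti\La(n,r)$, use the nonvanishing of the highest weight space for the ``if'' direction, and for the ``only if'' direction use that all weights of $V_\vep(\bfQ)$ are $\unlhd\deg\bfQ$ together with the partial-sum comparison at index $n$. The paper's argument is terser (it asserts $\bfone_{\ti\al}V_\vep(\bfQ)\ne0\Rightarrow\ti\al\unlhd\la$ without spelling out the transfer via Lemma~\ref{free} and Corollary~\ref{Coro of free module}), but the substance is identical.
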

\begin{proof}
Let $\bfQ\in\QNr$ and $\la=\deg\bfQ$. If $\la\in\ti\La(n,r)$, then $e \bfone_\la V_\vep(\bfQ)=\bfone_\la V_\vep(\bfQ) \not=0$, and hence $e V_\vep(\bfQ)\not=0$.  Now we assume $e V_\vep(\bfQ)\not=0$.
Since $1=\sum_{\al\in\La(N,r)}\bfone_\al$,
$$eV_\vep(\bfQ)=e\bop_{\al\in\La(N,r)}\bfone_\al V_\vep(\bfQ)=
\bop_{\al\in\Lanr}\bfone_{\ti\al} V_\vep(\bfQ).
$$ This together with the fact that $eV_\vep(\bfQ)\not=0$ implies that there
exists $\al\in\Lanr$ such that $\bfone_{\ti\al} V_\vep(\bfQ)\not=0$. Since $\bfone_{\ti\al} V_\vep(\bfQ)\not=0$, we have $\ti\al\unlhd\la$ and, hence,
$r=\sum_{1\leq i\leq n}\al_i\leq\sum_{1\leq i\leq n}\la_i\leq r$. Therefore, $\la\in\ti\La(n,r)$, as desired.
\end{proof}
\begin{Prop}\label{prop2 of sGvep}
Assume $N\geq n$,  and let $\bfQ\mapsto\ti\bfQ$ be the injective map
from $\Qnr$ into $\QNr$ given in \eqref{tibfQ}.
Then $\sGvep(L_\vep(\tibfQ))\cong L_\vep(\bfQ)$ and $\sGvep(V_\vep(\tibfQ))\cong V_\vep(\bfQ)$ for $\bfQ\in\Qnr$. In particular we have $\dim_\mbc V_\vep(\tibfQ)_\mu=\dim_\mbc V_\vep(\bfQ)_\mu$ for $\mu\in\Lanr$.
\end{Prop}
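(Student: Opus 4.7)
The plan is to reduce everything to Lemma \ref{G(LZ(tibfQ))} via base change, then use standard idempotent-truncation to descend from $L_\vep(\tibfQ)$ to its simple quotient $V_\vep(\tibfQ)$. First I observe that since $e\in\afSNrZ$, left multiplication by $e$ commutes with the specialization $-\otimes_\mpZ\mbc$. Tensoring the integral isomorphism $eL_\mpZ(\tibfQ)\cong L_\mpZ(\bfQ)$ provided by Lemma \ref{G(LZ(tibfQ))} with $\mbc$ therefore gives
\[
\sGvep(L_\vep(\tibfQ))=eL_\vep(\tibfQ)=\bigl(eL_\mpZ(\tibfQ)\bigr)\otimes_\mpZ\mbc\cong L_\mpZ(\bfQ)\otimes_\mpZ\mbc=L_\vep(\bfQ),
\]
which handles the first claim.

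Next, since $e$ is an idempotent the functor $\sGvep=e(-)$ is exact, so the canonical surjection $L_\vep(\tibfQ)\twoheadrightarrow V_\vep(\tibfQ)$ induces a surjection $L_\vep(\bfQ)\cong\sGvep(L_\vep(\tibfQ))\twoheadrightarrow\sGvep(V_\vep(\tibfQ))$. By Proposition \ref{prop1 of sGvep}, $\sGvep(V_\vep(\tibfQ))\neq 0$ since $\deg\tibfQ=\ti\la\in\ti\La(n,r)$, where $\la=\deg\bfQ$. The standard Morita-type fact for idempotents (if $V$ is a simple $\afSNrvep$-module with $eV\neq 0$, then $eV$ is a simple $e\afSNrvep e$-module, because for any nonzero $x\in eV$ one has $\afSNrvep x=V$ and hence $e\afSNrvep e\cdot x=eV$) then makes $\sGvep(V_\vep(\tibfQ))$ a simple $\afSrvep$-module. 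Being a simple quotient of $L_\vep(\bfQ)$, which by construction admits $V_\vep(\bfQ)$ as its unique simple quotient, it must be isomorphic to $V_\vep(\bfQ)$.

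For the weight-space dimensions, under the algebra isomorphism $e\afSNrvep e\cong\afSrvep$ the idempotents $\bfone_\timu\in\afSNrvep$ correspond to the idempotents $\bfone_\mu\in\afSrvep$, and therefore
\[
V_\vep(\bfQ)_\mu\cong\bfone_\mu\sGvep(V_\vep(\tibfQ))=\bfone_\timu V_\vep(\tibfQ)=V_\vep(\tibfQ)_\timu,
\]
which gives $\dim_\mbc V_\vep(\tibfQ)_\mu=\dim_\mbc V_\vep(\bfQ)_\mu$ once $V_\vep(\tibfQ)_\mu$ is identified with the $\timu$-weight space of $V_\vep(\tibfQ)$.

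There is no serious obstacle; the proof amounts to a formal assembly of three ingredients already available, namely the integral result of Lemma \ref{G(LZ(tibfQ))}, the support calculation of Proposition \ref{prop1 of sGvep}, and the exactness/simplicity properties of idempotent truncation. The subtlest bookkeeping point is verifying that $eV_\vep(\tibfQ)$ is simple as an $\afSrvep$-module rather than merely as an $e\afSNrvep e$-module, but this is automatic from the algebra isomorphism $e\afSNrvep e\cong\afSrvep$ built into the definition of $\sGvep$.
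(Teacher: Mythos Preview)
Your proof is correct and follows essentially the same route as the paper: base-change Lemma \ref{G(LZ(tibfQ))} to obtain $\sGvep(L_\vep(\tibfQ))\cong L_\vep(\bfQ)$, then use exactness of $e(-)$, Proposition \ref{prop1 of sGvep}, and the idempotent-truncation simplicity fact (the paper cites this as \cite[6.2(b)]{Gr80}) to identify $\sGvep(V_\vep(\tibfQ))$ with the unique simple quotient $V_\vep(\bfQ)$. You also spell out the weight-space identification $\bfone_\mu\leftrightarrow\bfone_{\ti\mu}$, which the paper leaves implicit.
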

\begin{proof}
Let $\bfQ\in\Qnr$. Since $L_\vep(\ti\bfQ)\cong(eL_\mpZ(\ti\bfQ)\ot_\mpZ
\mbc)\bop((1-e)L_\mpZ(\tibfQ)\ot_\mpZ\mbc)$, we conclude that $\sGvep(L_\vep(\tibfQ))\cong\sG(L_\mpZ(\tibfQ))\ot_\mpZ\mbc$.
It follows from Lemma \ref{G(LZ(tibfQ))} that
$\sGvep(L_\vep(\tibfQ))\cong L_\mpZ(\bfQ)\ot_\mpZ\mbc=L_\vep(\bfQ)$. This together with the fact that $V_\vep(\tibfQ)$ is the homomorphic image of $L_\vep(\tibfQ)$, implies that $\sGvep(V_\vep(\tibfQ))$ is the homomorphic image of $L_\vep(\bfQ)$. Furthermore by Proposition \ref{prop1 of sGvep} and \cite[6.2(b)]{Gr80} we conclude that $\sGvep(V_\vep(\tibfQ))$ is an irreducible $\afSrvep$-module. Therefore, $\sGvep(V_\vep(\tibfQ))\cong V_\vep(\bfQ)$, since $V_\vep(\bfQ)$ is the unique irreducible quotient $\afSrvep$-module of $L_\vep(\bfQ)$.
\end{proof}

We end this section with an application of the above results.
Let $S$ be an associative algebra over a field $k$ and let
$\mpf\not=0$ be any idempotent in $S$. If $L$ is a
finite dimensional irreducible $S$-module, then by \cite[6.2(b)]{Gr80},
$\mpf L$ is either zero or is an irreducible $\mpf S\mpf$-module. If $\mpf L\not=0$, then
by \cite[6.6(b)]{Gr80}, we have
\begin{equation}\label{[V:L]}
[V:L]=[\mpf V:\mpf L]
\end{equation}
for any
finite dimensional $S$-module $V$, where $[V:L]$ is the multiplicity of $L$ as a composition factor of $V$. Combining Proposition
\ref{G(L(tibfQ))}, Propostion
\ref{prop2 of sGvep} with \eqref{[V:L]} yields the following result.

\begin{Coro}
Assume $N\geq n$,  and let $\bfQ\mapsto\ti\bfQ$ be the injective map
from $\Qnr$ into $\QNr$ given in \eqref{tibfQ}. Then

(1) $[L_\vep(\tibfQ):V_\vep(\ti{\bfQ'})]=[L_\vep(\bfQ):V_\vep(\bfQ')]$ for $\bfQ,\bfQ'\in\Qnr$.

(2) $[L_\vep(\bfQ):V_\vep(\bfQ')]=0$ for $\bfQ\in\QNr\setminus\tiQnr$ and $\bfQ'\in\tiQnr$.
\end{Coro}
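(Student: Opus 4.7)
The plan is to apply the general multiplicity identity \eqref{[V:L]} with $S=\afSNrvep$ and $\mpf=e$, combining it with the computations of $\sGvep$ on standard and irreducible modules provided by Propositions \ref{prop1 of sGvep} and \ref{prop2 of sGvep}. In both parts the crux is checking whether $eL_\vep(\bfQ)$ and $eV_\vep(\bfQ')$ are nonzero.

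For part (1), fix $\bfQ,\bfQ'\in\Qnr$. Since $\deg\ti{\bfQ'}\in\ti\La(n,r)$, Proposition \ref{prop1 of sGvep} yields $eV_\vep(\ti{\bfQ'})\neq 0$, so \eqref{[V:L]} gives
\[
[L_\vep(\tibfQ):V_\vep(\ti{\bfQ'})]=[eL_\vep(\tibfQ):eV_\vep(\ti{\bfQ'})].
\]
Proposition \ref{prop2 of sGvep} identifies $eL_\vep(\tibfQ)\cong L_\vep(\bfQ)$ and $eV_\vep(\ti{\bfQ'})\cong V_\vep(\bfQ')$, so the right-hand side equals $[L_\vep(\bfQ):V_\vep(\bfQ')]$, as claimed.

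For part (2), take $\bfQ\in\QNr\setminus\tiQnr$ and write $\bfQ'=\ti{\bfQ''}$ with $\bfQ''\in\Qnr$. Again $eV_\vep(\bfQ')\neq 0$ by Proposition \ref{prop1 of sGvep}, so \eqref{[V:L]} reduces the claim to showing $eL_\vep(\bfQ)=0$. Using $e=\sum_{\mu\in\Lanr}\bfone_{\ti\mu}$ and orthogonality of the idempotents $\bfone_{\ti\mu}$, decompose
\[
eL_\vep(\bfQ)=\bop_{\mu\in\Lanr}\bfone_{\ti\mu}L_\vep(\bfQ).
\]
Running the same weight-comparison as in the proof of Proposition \ref{prop1 of sGvep}, any nonzero summand forces $\ti\mu\unlhd\deg\bfQ$; but then $r=\sum_{j\leq n}\mu_j\leq\sum_{j\leq n}(\deg\bfQ)_j\leq r$, forcing $(\deg\bfQ)_j=0$ for $j>n$, i.e.\ $\bfQ\in\tiQnr$, contradicting the hypothesis.

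The only mildly delicate point is the implication $\bfone_\al L_\vep(\bfQ)\neq 0\Rightarrow \al\unlhd\deg\bfQ$ used in part (2). This is inherited from the highest-weight structure of $L(\bfQ)$ (which sits in weight spaces indexed by $\mu\unlhd\deg\bfQ$) through the $\mpZ$-form $L_\mpZ(\bfQ)\subseteq L(\bfQ)$ and its specialization to $L_\vep(\bfQ)$, so no serious obstacle is expected.
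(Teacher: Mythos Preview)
Your proof is correct and follows essentially the same approach as the paper, which simply states that the corollary follows by combining Proposition~\ref{G(L(tibfQ))}, Proposition~\ref{prop2 of sGvep}, and \eqref{[V:L]}. The only cosmetic difference is in part~(2): the paper invokes Proposition~\ref{G(L(tibfQ))} to get $eL(\bfQ)=0$ in the generic case and then specializes (via Corollary~\ref{Coro of free module}) to obtain $eL_\vep(\bfQ)=0$, whereas you rerun the weight-comparison argument directly on $L_\vep(\bfQ)$; both routes rest on the same fact that the weight support of $L_\vep(\bfQ)$ lies below $\deg\bfQ$.
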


\section{Classification of irreducible modules for $\afUnrvep$ and $\afSrvep$}
Finite dimensional irreducible $\afUslvep$-modules were classified by Chari--Pressley \cite{CP97}
in the case
where $\vep$ is a root of unity of odd order.
Frenkel--Mukhin \cite{FM2}  extend Chari--Pressley's result to all
roots of unity.
We will use these results to classify finite dimensional irreducible $\afUnrvep$-modules in Theorem \ref{classification Uvtg vep(n,r)}. Furthermore, we will use Proposition \ref{prop1 of sGvep}, Proposition \ref{prop2 of sGvep} and Theorem \ref{classification Uvtg vep(n,r)} to classify finite dimensional irreducible $\afSrvep$-modules in  Theorem \ref{classification afSrvep}.
Finally we will use Proposition \ref{prop1 of sGvep} and Theorem \ref{classification afSrvep} to generalize \cite[(6.5g)]{Gr80} to the affine case in Proposition \ref{category equivalence of affine quantum Schur algebras}.

\subsection{A simple lemma}

We need some preparation. First we will generalize \cite[3.3]{Lu89} to all roots of unity in Corollary \ref{m=m'}.
\begin{Lem}\label{m t}
Let $m=m_0+lm_1$, $0\leq m_0 \leq l-1$, $m_1 \in\mbn$. Then $$\bigg[{m\atop t}\bigg]_\vep=\vep^{l(t_1l-t_1m_0-t_1lm_1-t_0m_1)}\bigg[{m_0\atop t_0}\bigg]_\vep\bigg({m_1\atop t_1}\bigg)$$ for $0\leq t\leq m$, where
 $t=t_0+lt_1$ with $0\leq t_0\leq l-1$ and $t_1\in\mbn$.
\end{Lem}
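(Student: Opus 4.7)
The plan is to establish this lemma as a $v$-analog of Lucas' theorem via a generating-function computation. First I would recall (or verify by a direct induction on $m$) the symmetric $v$-binomial identity
$$\prod_{j=0}^{m-1}\bigl(1 + v^{m-1-2j} X\bigr) = \sum_{t=0}^{m} \bigg[{m\atop t}\bigg]_v X^t \in \mpZ[X].$$
Since both sides lie in $\mpZ[X]$, it suffices to specialize $v = \vep$ and compare coefficients of $X^t$.

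After specializing, I would split the product into the initial block $j = 0, \ldots, m_0 - 1$ and into $m_1$ further blocks, the $k$-th of which ($k = 0, \ldots, m_1-1$) has $j$ ranging over $m_0 + lk, \ldots, m_0 + l(k+1) - 1$. Using $\vep^{2l} = 1$ (which holds in both cases $l' = l$ odd and $l' = 2l$ even), a direct computation shows that the exponent of $\vep$ in each factor of the $k$-th block depends only on the position $s \in \{0,\ldots,l-1\}$ within the block, not on $k$. Hence the product over all $m_1$ blocks is the $m_1$-th power of a single block.

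The key technical input is the identity
$$\prod_{s=0}^{l-1}\bigl(1 + \vep^{l-1-2s} Z\bigr) = 1 + Z^l,$$
which I would prove by applying the same generating-function identity to $m = l$ in place of $m$, and observing that $[l]_\vep = 0$ forces $\bigl[{l\atop a}\bigr]_\vep = 0$ for $1 \leq a \leq l-1$, while the extreme terms give $1$ and $Z^l$. Substituting this and the classical binomial theorem $(1+Z^l)^{m_1} = \sum_{t_1} \binom{m_1}{t_1} Z^{lt_1}$, the block-product becomes an explicit sum indexed by $t_1$. The initial block contributes $\sum_{t_0} \bigl[{m_0\atop t_0}\bigr]_\vep Y^{t_0}$ after a shift $Y = \vep^{lm_1} X$.

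Multiplying the two sums and comparing the coefficient of $X^t$ with $t = t_0 + lt_1$ (noting that the ranges $0 \leq t_0 \leq l-1$ and $0 \leq t_1 \leq m_1$ uniquely pair the monomials $X^t$) yields the claimed product formula, with a total power of $\vep$ accumulated from the various changes of variable. The main obstacle is the bookkeeping of this power: the raw exponent produced by the substitution must be reduced modulo $l'$ and shown equal to the stated $l(t_1 l - t_1 m_0 - t_1 l m_1 - t_0 m_1)$. Separating the two cases $l' = l$ and $l' = 2l$ and using $\vep^{2l} = 1$ in the common bound of the difference makes this reduction routine, completing the proof.
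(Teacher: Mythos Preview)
Your proposal is correct and follows essentially the same generating-function argument as the paper: both specialize the identity $\prod_{j=0}^{m-1}(1+v^{2j}X)=\sum_t\bigl[{m\atop t}\bigr]_v v^{t(m-1)}X^t$ (your symmetric variant differs only by a harmless substitution $X\mapsto v^{-(m-1)}X$), split off the first $m_0$ factors and $m_1$ blocks of length $l$, collapse each block to $1+(\text{const})X^l$ via the vanishing of $\bigl[{l\atop a}\bigr]_\vep$ for $0<a<l$, expand with the ordinary binomial theorem, and compare coefficients of $X^t$. The only difference is which normalization of the generating function is used, and hence where the powers of $\vep$ accumulate; this is purely cosmetic.
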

\begin{proof}
Let $X$ be an indeterminate. By the proof of \cite[3.2]{Lu89} we have
\begin{equation}\label{formula X}
\prod_{j=0}^{m-1}(1+v^{2j}X)=\sum_{t=0}^m\bigg[{m\atop t}\bigg]_vv^{t(m-1)}X^t\in\mpZ[X].
\end{equation}
This gives
$$\prod_{j=m_0+sl}^{m_0+(s+1)l-1}(1+\vep^{2j}X)=
\prod_{j=0}^{l-1}(1+\vep^{2j}X)=1+\vep^{l(l-1)}X^l$$
for $s\in\mbz$.
It follows that $$\prod_{j=m_0}^{m-1}(1+\vep^{2j}X)=\prod_{s=0}^{m_1-1}
\bigg(\prod_{j=m_0+sl}^{m_0+(s+1)l-1}(1+\vep^{2j}X)\bigg)
=(1+\vep^{l(l-1)}X^l)^{m_1}.$$
This together with \eqref{formula X}
shows that
\begin{equation*}
\begin{split}
\sum_{t=0}^m\bigg[{m\atop t}\bigg]_\vep\vep^{t(m-1)}
X^t&=(1+\vep^{l(l-1)}X^l)^{m_1}\prod_{j=0}^{m_0-1}(1+\vep^{2j}X)\\
&=\sum_{t_1=0}^{m_1}\bigg({m_1\atop t_1}\bigg)(\vep^{l(l-1)}X^l)^{t_1}
\sum_{t_0=0}^{m_0}\bigg[{m_0\atop t_0}\bigg]_\vep\vep^{t_0(m_0-1)}X^{t_0}\\
&=\sum_{0\leq t\leq m,\,t=t_0+lt_1\atop 0\leq t_0\leq l-1,\,t_1\in\mbn}\bigg[{m_0\atop t_0}\bigg]_\vep\bigg({m_1\atop t_1}\bigg)
\vep^{t_0(m_0-1)+t_1l(l-1)}X^t.
\end{split}
\end{equation*}
Comparing the coefficients of $X^t$ in the above equality, we obtain the desired formula.
\end{proof}

\begin{Coro}\label{m l}
Let $m=m_0+lm_1$, $0\leq m_0\leq l-1$, $m_1\in\mbz$. Then
$$\bigg[{m\atop l}\bigg]_\vep=
\begin{cases}
m_1&\text{if $l'$ is odd}\\
(-1)^{l+m}m_1&\text{if $l'$ is even}
\end{cases}
$$
\end{Coro}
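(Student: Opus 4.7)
The plan is to specialize Lemma \ref{m t} at $t = l$. Writing $l = 0 + l \cdot 1$ identifies $t_0 = 0$ and $t_1 = 1$, so that $\bigg[{m_0\atop 0}\bigg]_\vep = 1$ and $\binom{m_1}{1} = m_1$. The formula of Lemma \ref{m t} then collapses to
\[
\bigg[{m\atop l}\bigg]_\vep \;=\; \vep^{\,l(l - m_0 - l m_1)}\, m_1,
\]
and it remains only to evaluate the leading sign $\vep^{l(l - m_0 - l m_1)}$.

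Next I split into cases on the parity of $l'$. If $l'$ is odd then $l = l'$, so $\vep^l = 1$ and the sign is trivial, yielding $m_1$. If $l'$ is even then $l = l'/2$, and since $\vep^{2l} = \vep^{l'} = 1$ while $\vep^l \neq 1$ by primitivity of $\vep$, we must have $\vep^l = -1$. Hence
\[
\vep^{\,l(l - m_0 - l m_1)} \;=\; (-1)^{\,l - m_0 - l m_1};
\]
reducing the exponent modulo $2$ gives $(-1)^{\,l + m_0 + l m_1} = (-1)^{\,l+m}$, matching the claimed formula.

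Since Lemma \ref{m t} is stated for $m_1 \in \mbn$ while the corollary allows $m_1 \in \mbz$, I would handle the case $m_1 < 0$ separately by invoking the standard negation identity $\bigg[{c\atop t}\bigg]_v = (-1)^t \bigg[{-c + t - 1\atop t}\bigg]_v$, which holds in $\mpZ$ for all $c \in \mbz$. Applied with $c = m$ and $t = l$, this reduces to a binomial with a nonnegative upper index, and after re-decomposing that index in the form $m_0' + l m_1'$ the previous argument applies; the resulting sign, combined with the outer factor of $(-1)^l$, produces exactly the stated expression.

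The proof is essentially a direct specialization of the preceding lemma; the only delicate point is the sign bookkeeping in the even case (verifying $l - m_0 - l m_1 \equiv l + m \pmod 2$) and the mild extension from $m_1 \in \mbn$ to $m_1 \in \mbz$, neither of which presents a genuine obstacle.
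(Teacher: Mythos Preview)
Your approach is essentially the same as the paper's: specialize Lemma \ref{m t} at $t=l$, read off the sign via $\vep^l\in\{1,-1\}$, and handle negative $m_1$ by the negation identity $\big[{c\atop t}\big]_v=(-1)^t\big[{-c+t-1\atop t}\big]_v$. The paper cites \cite[3.3(a)]{Lu89} for the odd case rather than redoing it, but your direct argument is equivalent. One small oversight: Lemma \ref{m t} is stated only for $0\le t\le m$, so its specialization at $t=l$ requires $m_1\ge 1$, not just $m_1\in\mbn$; the case $m_1=0$ (where $0\le m=m_0<l$) needs a separate one-line remark that $\big[{m_0\atop l}\big]=0$ in $\mpZ$, exactly as the paper does.
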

\begin{proof}
In the case where $l'$ is odd, the assertion follows from \cite[3.3(a)]{Lu89}. Now we assume $l'$ is even. Then $l'=2l$. Since $\vep$ is a primitive $l'$-th root of unity, we conclude that $\vep^{l}=-1$. If $m_1\geq 1$, then by Lemma \ref{m t}, we have $\big[{m\atop l}\big]_\vep=m_1\vep^{l(l-m)}=(-1)^{l+m}m_1$. If $m_1=0$ then $\big[{m\atop l}\big]=0$, and hence $\big[{m\atop l}\big]_\vep=0$. If $m_1<0$, then $\big[{m\atop l}\big]_\vep=(-1)^l\big[{-m+l-1\atop l}\big]_\vep=(-1)^{l+1}\vep^{l(m+1)}m_1=(-1)^{l+m}m_1$.
\end{proof}

\begin{Coro}\label{m=m'}
If $m,m'\in\mbz$ satisfy $\vep^{m}=\vep^{m'}$, $\big[{m\atop l}\big]_\vep=\big[{m'\atop l}\big]_\vep$, then $m=m'$.
\end{Coro}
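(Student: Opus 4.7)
The plan is to reduce the statement to a routine division-algorithm bookkeeping that exploits Corollary \ref{m l}. I would write $m=m_0+lm_1$ and $m'=m_0'+lm_1'$ with $0\le m_0,m_0'\le l-1$ and $m_1,m_1'\in\mbz$, and aim to show $m_0=m_0'$ (extracted from $\vep^m=\vep^{m'}$) and $m_1=m_1'$ (extracted from the binomial identity). The argument splits according to the parity of $l'$.

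If $l'$ is odd, then $l=l'$ is the order of $\vep$, so $\vep^m=\vep^{m'}$ forces $m\equiv m'\pmod{l}$, giving $m_0=m_0'$ at once. Applying Corollary \ref{m l} to both sides of $\bigg[{m\atop l}\bigg]_\vep=\bigg[{m'\atop l}\bigg]_\vep$ then yields $m_1=m_1'$, hence $m=m'$. If $l'$ is even, then $l'=2l$ and $\vep^l=-1$, so $\vep^m=(-1)^{m_1}\vep^{m_0}$ and likewise $\vep^{m'}=(-1)^{m_1'}\vep^{m_0'}$. Reducing exponents modulo $2l$, the integers $m_0+l(m_1\bmod 2)$ and $m_0'+l(m_1'\bmod 2)$ both lie in $\{0,1,\dots,2l-1\}$; since $\vep$ has order $2l$, comparing the two expressions forces them to be equal, which yields both $m_0=m_0'$ and $m_1\equiv m_1'\pmod{2}$. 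Corollary \ref{m l} then gives $(-1)^{l+m}m_1=(-1)^{l+m'}m_1'$, and since $m=m_0+lm_1$ and $m'=m_0+lm_1'$, the ratio of the two sign factors is $(-1)^{l(m_1-m_1')}$; using $m_1\equiv m_1'\pmod{2}$ this equals $+1$ regardless of the parity of $l$, so $m_1=m_1'$.

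I expect no significant obstacle: the entire argument is routine once Corollary \ref{m l} is available. The only delicate point is the even case, where one must extract not only $m_0=m_0'$ but also the parity equality $m_1\equiv m_1'\pmod{2}$ from the single hypothesis $\vep^m=\vep^{m'}$, since exactly this extra parity information is what forces the two sign factors $(-1)^{l+m}$ and $(-1)^{l+m'}$ of Corollary \ref{m l} to cancel and leave a bare equality $m_1=m_1'$.
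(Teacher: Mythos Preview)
Your proof is correct and follows essentially the same approach as the paper: reduce via Corollary~\ref{m l}, handle the odd case by citing the known result, and in the even case use that $\vep$ has order $2l$ to pin down residues and parities so that the sign factors $(-1)^{l+m}$ and $(-1)^{l+m'}$ agree. The only cosmetic difference is that the paper writes $m=a+sl'$ with $0\le a<l'=2l$ (so the two subcases $a<l$ and $a\ge l$ correspond to your parity split of $m_1$), whereas you decompose modulo $l$ and track the parity of $m_1$ separately; this is the same bookkeeping.
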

\begin{proof}
In the case where $l'$ is odd, the assertion follows from \cite[3.3(b)]{Lu89}. Now we assume $l'$ is even. Then $l'=2l$. Since $\vep^{m}=\vep^{m'}$, we have $m \equiv m'(\mnmod l')$. Thus we may write $m=a+sl'$ and $m'=a+tl'$, where $0\leq a<l'$ and $s,t\in\mbz$. By Corollary \ref{m l} we have
\begin{equation*}
\bigg[{m\atop l}\bigg]_\vep=
\begin{cases}
(-1)^{l+m}2s&\text{if $0\leq a<l$}\\
(-1)^{l+m}(2s+1)&\text{if $l\leq a<l'$}
\end{cases}
\quad\text{and}\
\bigg[{m'\atop l}\bigg]_\vep=
\begin{cases}
(-1)^{l+m'}2t&\text{if $0\leq a<l$}\\
(-1)^{l+m'}(2t+1)&\text{if $l\leq a<l'$}
\end{cases}
\end{equation*}
Since $m-m'=(s-t)l'$ is even we have $(-1)^{l+m}=(-1)^{l+m'}$. This together with the fact that $\big[{m\atop l}\big]_\vep=\big[{m'\atop l}\big]_\vep$ implies that $s=t$. Consequently, $m=m'$, as desired.
\end{proof}

\subsection{Finite dimensional $\afUslvep$-modules}

We now review the classification theorem of finite dimensional irreducible $\afUslvep$-modules.
Let $\Pn$ be the set of $(n-1)$-tuple polynomials $\bfP=(P_1(u),\cdots,P_{n-1}(u))$ such that $P_i(u)\in\mbc[u]$ and the constant term of $P_i(u)$ is $1$ for $1\leq i\leq n-1$.
For $\bfP\in\Pn$, define $P_{j,s}\in\mbc$,
for $1\leq j\leq n-1$ and $s\in\mbz$, as in $P_j^\pm(u)=\sum_{s\geq
0}P_{j,\pm s} u^{\pm s}$, where $P_j^\pm(u)$ is defined using
\eqref{f^pm(u)}.

For $\bfP\in\Pn$ let $\Icp_\vep(\bfP)$ be the left ideal of $\afUslvep$ generated by
$(\ttx_{j,s}^+)^{(m)} ,\ms P_{j,s}-P_{j,s},$ $\ti\ttk_j-\vep^{\mu_j}$ and $\big[{\ti\ttk_j;0\atop l}\big]_\vep-\big[{\mu_j\atop l}\big]_\vep$ for $1\leq j\leq n-1$, $m\in\mbn$ and $s\in\mbz$, where
$\mu_j=\mathrm{deg}P_j(u)$, and define
$$\Mcp_\vep(\bfP)=\afUslvep/\Icp_\vep(\bfP).$$
Then $\Mcp_\vep(\bfP)$ has a unique irreducible quotient, denoted by
$\Vcp_\vep(\bfP)$.

The following classification theorem of finite dimensional irreducible $\afUslvep$-modules is given in \cite[8.2]{CP97} and \cite[2.4]{FM2}.
\begin{Thm}\label{classification quantum affine sln}
The modules $\Vcp_\vep(\bfP)$ with $\bfP\in\Pn$ are all nonisomorphic finite dimensional irreducible $\afUslvep$-modules of type $1$.
\end{Thm}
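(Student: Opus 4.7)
The plan is to establish three things in sequence: (a) each $\Vcp_\vep(\bfP)$ is a finite dimensional irreducible module of type $1$; (b) every finite dimensional irreducible $\afUslvep$-module of type $1$ is isomorphic to some $\Vcp_\vep(\bfP)$; and (c) modules attached to distinct $\bfP, \bfP'\in\Pn$ are nonisomorphic.

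For (a), I would not construct $\Vcp_\vep(\bfP)$ directly; instead I would specialize from the generic case. Over $\mbcv$ the modules $\Vcp_{v}(\bfP_v)$ are known to be finite dimensional irreducible $\afUslv$-modules with Drinfeld polynomials $\bfP_v$ (Chari--Pressley). Choose a highest weight vector $w_0\in\Vcp_v(\bfP_v)$ and form the $\mpZ$-lattice $\Vcp_\mpZ(\bfP):=\afUslZ\cdot w_0$; standard PBW-type arguments show this is a free $\mpZ$-module of finite rank. Then $\Vcp_\mpZ(\bfP)\ot_\mpZ\mbc$ is finite dimensional and, by construction, is a quotient of $\Mcp_\vep(\bfP)$. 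Its unique irreducible quotient is $\Vcp_\vep(\bfP)$, which is therefore finite dimensional. That it is of type $1$ follows from the fact that the generators $\ti\ttk_j$ act with eigenvalue $\vep^{\mu_j}$ on the highest weight vector and that $\afUslvep$ is generated by divided powers.

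For (b), let $V$ be any finite dimensional irreducible $\afUslvep$-module of type $1$. Using the triangular decomposition of $\afUslvep$ associated with the Drinfeld realization (into $\afUslvep^+$, $\afUslvep^0$, $\afUslvep^-$ generated respectively by $\ttx^+_{j,s}$, by the Cartan-type elements $\ti\ttk_j^{\pm 1}$, $\bigl[{\ti\ttk_j;0\atop t}\bigr]$ and $\ms P_{j,s}$, and by $\ttx^-_{j,s}$), I would argue that $V$ contains a common eigenvector $v_0$ for $\afUslvep^0$ that is annihilated by all $(\ttx^+_{j,s})^{(m)}$ with $m\geq 1$: the eigenspace of $\afUslvep^0$ on $V$ is nonzero since this subalgebra is commutative (by (QLA1), (QLA2), (QLA4)) and acts on a finite dimensional space over $\mbc$, and one selects an eigenvector that is maximal with respect to the obvious partial order on weights. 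Let $\vep^{\mu_j}$ denote the eigenvalue of $\ti\ttk_j$ and $c_j$ the eigenvalue of $\bigl[{\ti\ttk_j;0\atop l}\bigr]_\vep$ on $v_0$. By Corollary \ref{m=m'}, the pair $(\vep^{\mu_j},c_j)$ determines the integer $\mu_j$ uniquely; moreover finite dimensionality and type $1$ force $\mu_j\in\mbn$. Let $\pi_{j,s}$ be the eigenvalues of $\ms P_{j,s}$ on $v_0$ and form generating functions $\Pi_j^\pm(u)=\sum_{s\ge 0}\pi_{j,\pm s}u^{\pm s}$. Adapting the Chari--Pressley/Frenkel--Mukhin argument (apply the $[\ttx^+_{j,s},\ttx^-_{j,t}]$ relation to $v_0$ together with finite dimensionality of $\afUslvep\cdot v_0$), one shows that $\Pi_j^+(u)$ and $\Pi_j^-(u)$ are the expansions at $0$ and $\infty$ of a single rational function of the form $P_j^+(u)$ attached to a polynomial $P_j(u)\in\mbc[u]$ of degree $\mu_j$ with $P_j(0)=1$. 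Setting $\bfP=(P_1,\ldots,P_{n-1})$ produces a surjection $\Mcp_\vep(\bfP)\twoheadrightarrow V$, whence $V\cong \Vcp_\vep(\bfP)$.

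Part (c) is then immediate: the Drinfeld polynomials are recovered from the joint $\afUslvep^0$-spectrum on the highest weight line of the module, so distinct $\bfP$'s give nonisomorphic $\Vcp_\vep(\bfP)$'s. The main technical obstacle is the rationality/polynomiality step inside part (b), i.e.\ showing that the formal series $\Pi_j^\pm(u)$ really do come from a single polynomial $P_j(u)$ of the predicted degree. This is where the root-of-unity case genuinely differs from the generic case treated in \cite{CP91}: one cannot read $\mu_j$ off $\vep^{\mu_j}$ alone and must use the refined eigenvalue of $\bigl[{\ti\ttk_j;0\atop l}\bigr]_\vep$ through Corollary \ref{m=m'}, together with a careful analysis of the $\ttx^\pm_{j,s}$-relations on finite dimensional representations, exactly as carried out in \cite[8.2]{CP97} and \cite[2.4]{FM2}.
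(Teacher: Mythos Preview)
The paper does not prove this theorem; it is quoted without proof from \cite[8.2]{CP97} (odd order case) and \cite[2.4]{FM2} (arbitrary roots of unity), and then used as a black-box input for the rest of \S4. Your outline is a faithful sketch of the strategy in those references---the specialization-from-generic construction for finiteness, the highest-weight/Drinfeld-polynomial analysis for exhaustiveness, and the recovery of $\bfP$ from the Cartan spectrum for separation---so there is nothing in the paper itself to compare it against.
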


\subsection{Classification of finite dimensional irreducible $\afUnrvep$-modules}

Now we are ready to classify finite dimensional irreducible $\afUnrvep$-modules.
Let $\bfP\in\Pn$ and $\la\in\La^+(n,r)$ be such that
$\la_i-\la_{i+1}=\mathrm{deg}P_i(u)$, for $1\leq i\leq n-1$. Define
$$
\Mcp(\bfP,\la)=\afUnv/\Icp(\bfP,\la),
$$
where $\bar I(\bfP,\la)$ is the left ideal of $\afUnv$ generated by
$\ttx^+_{i,s}$, $\msP_{i,s}-P_{i,s}$ and $\ttk_j-\ttv^{\la_j}$ for
$1\leq i\leq n-1$, $s\in\mbz$ and $1\leq j\leq n$. The $\afUnv$-module $\Mcp(\bfP,\la)$
has a unique irreducible quotient $\afUnv$-module, which is denoted by
$\Lcp(\bfP,\la)$. Similarly, we define
$$
\Mcp_\vep(\bfP,\la)=\afUnvep/\Icp_{\vep}(\bfP,\la),
$$
where $\Icp_{\vep}(\bfP,\la)$ is the left ideal of $\afUnvep$ generated by
$(\ttx^+_{i,s})^{(m)}$, $\msP_{i,s}-P_{i,s}$, $\ttk_j-\vep^{\la_j}$ and $\big[{\ttk_j;0\atop l}\big]_\vep-\big[{\la_j \atop l}\big]_\vep$  for
$1\leq i\leq n-1$, $s\in\mbz$, $m\in\mbn$ and $1\leq j\leq n$. The $\afUnvep$-module $\Mcp_\vep(\bfP,\la)$
has a unique irreducible quotient $\afUnvep$-module, which is denoted by
$\bar V_\vep(\bfP,\la)$.


\begin{Lem}\label{Lem1 for classification Uvtg vep(n,r)}
Let $\bfP\in\Pn$ and $\la\in\La^+(n,r)$ be such that
$\la_i-\la_{i+1}=\mathrm{deg}P_i(u)$, for $1\leq i\leq n-1$. Then $\bar V_\vep(\bfP,\la)|_{\afUslvep}\cong\bar V_\vep(\bfP)$ and
$\bar V_\vep(\bfP,\la)$ is a $\afUnrvep$-module via the map $\zeta_{r,\vep}$ given in \eqref{zetarvep}.
\end{Lem}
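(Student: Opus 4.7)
The plan is to prove the two claims separately, largely by reducing to the generic case over $\mbc(v)$.

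For the isomorphism $\Vcp_\vep(\bfP,\la)|_{\afUslvep}\cong\Vcp_\vep(\bfP)$, I would first define a $\afUslvep$-homomorphism $\Mcp_\vep(\bfP)\to\Vcp_\vep(\bfP,\la)$ by sending $1+\Icp_\vep(\bfP)$ to the canonical generator $v_{\bfP,\la}$ of $\Vcp_\vep(\bfP,\la)$. To verify this is well-defined, each generator of $\Icp_\vep(\bfP)$ must annihilate $v_{\bfP,\la}$: the relations $(\ttx_{j,s}^+)^{(m)}v_{\bfP,\la}=0$ and $(\ms P_{j,s}-P_{j,s})v_{\bfP,\la}=0$ are immediate since these generators already lie in $\Icp_\vep(\bfP,\la)$, and $\ti\ttk_jv_{\bfP,\la}=\vep^{\mu_j}v_{\bfP,\la}$ follows from $\ti\ttk_j=\ttk_j\ttk_{j+1}^{-1}$ together with $\mu_j=\la_j-\la_{j+1}$. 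The delicate check is $\big[{\ti\ttk_j;0\atop l}\big]_\vep v_{\bfP,\la}=\big[{\mu_j\atop l}\big]_\vep v_{\bfP,\la}$, which I would obtain by passing to the generic Verma-type module $\Mcp(\bfP,\la)$ over $\mbc(v)$: a direct rational-function computation using $\ti\ttk_j\mapsto v^{\mu_j}$ yields $\big[{\ti\ttk_j;0\atop l}\big]v_0=\big[{\mu_j\atop l}\big]_v v_0$, an identity that lives in the $\mpZ$-integral form and specializes at $v\mapsto\vep$.

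Next I would show the map is onto $\Vcp_\vep(\bfP,\la)$, i.e., $\afUslvep\cdot v_{\bfP,\la}=\Vcp_\vep(\bfP,\la)$: the extra generators $\ttk_j^{\pm 1}$ and $\big[{\ttk_j;0\atop t}\big]_\vep$ of $\afUnvep$ over $\afUslvep$ commute with the generators of $\afUslvep$ up to scalars via (QLA2) and act on $v_{\bfP,\la}$ by scalars, so a word-length induction rewrites every $\afUnvep$-word on $v_{\bfP,\la}$ as an $\afUslvep$-word times a scalar. For the $\afUslvep$-irreducibility of $\Vcp_\vep(\bfP,\la)$, I would take a nonzero $\afUslvep$-submodule $W$ and argue it is automatically $\afUnvep$-stable: by Corollary \ref{m=m'} the joint datum of $\ttk_j$-eigenvalue and bracket-eigenvalue determines the full $\ttk$-weight in $\mbz^n$, and since all weights in $\Vcp_\vep(\bfP,\la)$ have total $\ttk$-weight summing to $r=\sum_j\la_j$ (lowering operators preserve this sum), each $\ti\ttk$-weight component is a single $\ttk$-weight space, hence $\ttk_j$ and $\big[{\ttk_j;0\atop t}\big]_\vep$ act on $W$ as scalars. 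Combined with the $\afUnvep$-irreducibility of $\Vcp_\vep(\bfP,\la)$, this forces $W=\Vcp_\vep(\bfP,\la)$, so the restriction is a nonzero irreducible quotient of $\Mcp_\vep(\bfP)$ and is thus isomorphic to $\Vcp_\vep(\bfP)$.

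For the second claim, since $\ker\zeta_{r,\vep}$ is a two-sided ideal and $\Vcp_\vep(\bfP,\la)=\afUnvep\cdot v_{\bfP,\la}$, it suffices to show $\ker\zeta_{r,\vep}\cdot v_{\bfP,\la}=0$. I would reduce this to the non-root-of-unity setting: the generic module $\Mcp(\bfP,\la)$ over $\mbc(v)$ is a $\afUnrv$-module via $\zeta_r$ for $\la\in\La^+(n,r)$ by the classification results in \cite{DDF}, so its $\mpZ$-form inherits a compatible $\afUnrZ$-structure that specializes at $v\mapsto\vep$ to the required factorization. I expect the main obstacle to be the bracket verification in Part 1: one must interpret $\big[{\ti\ttk_j;0\atop l}\big]_\vep$ inside $\afUnvep$ via the embedding $\afUslZ\subseteq\afUnZ$ and track its action through the $\mpZ$-form, where Corollary \ref{m=m'} provides the underlying uniqueness that makes the weight datum consistent at roots of unity.
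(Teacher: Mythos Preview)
Your plan diverges from the paper's in the order of the two claims, and Part~2 contains a genuine error.

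In Part~2 you assert that the generic Verma-type module $\Mcp(\bfP,\la)$ is a $\afUnrv$-module via $\zeta_r$. This is false: $\Mcp(\bfP,\la)$ has weight spaces for all $\la-\sum_i m_i\alpha_i$ with $m_i\ge0$, many of which lie outside $\Lanr$, whereas any $\afUnrv$-module decomposes as $\bigoplus_{\mu\in\Lanr}\bfone_\mu V$. It is only the \emph{irreducible quotient} $\Lcp(\bfP,\la)$ that is a $\afUnrv$-module, by \cite[4.7.5]{DDF}. The paper therefore works with $\Lcp(\bfP,\la)$: it forms the lattice $\Lcp_\mpZ(\bfP,\la)=\afUnrZ w_0$, shows it is $\mpZ$-free (using \cite[8.2]{CP97}, \cite[2.5]{FM2} and the equality $\afUnrZ w_0=\afUslZ w_0$), so that the specialization $\Lcp_\vep(\bfP,\la)$ has one-dimensional top weight space and hence a unique irreducible $\afUnrvep$-quotient $\bar V_\vep'(\bfP,\la)$, and then identifies $\bar V_\vep'(\bfP,\la)\cong\bar V_\vep(\bfP,\la)$ via the surjection $\Mcp_\vep(\bfP,\la)\to\Lcp_\vep(\bfP,\la)\to\bar V_\vep'(\bfP,\la)$. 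Your sketch omits both the freeness of the lattice and this identification step, and without them the passage from the generic case does not reach $\bar V_\vep(\bfP,\la)$.

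For Part~1 the paper's route is much shorter once Part~2 is in hand: the identity $\afUnrvep=\zeta_{r,\vep}(\afUslvep)$ from \eqref{image of afuslvep} makes the $\afUslvep$-restriction of any irreducible $\afUnrvep$-module tautologically irreducible, after which it is identified with $\Vcp_\vep(\bfP)$ by highest weight. Your direct argument for Part~1 can be made to work, but it needs the fact that \emph{all} Cartan elements $\big[{\ttk_j;0\atop t}\big]$ act diagonalizably on $\Vcp_\vep(\bfP,\la)$ with the expected eigenvalues $\big[{\mu_j\atop t}\big]_\vep$; this follows from the integral-form commutation relations between $\big[{\ttk_j;c\atop t}\big]$ and divided powers, but you only gesture at it. Reversing the order as the paper does avoids this verification entirely.
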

\begin{proof}

By \cite[4.7.5]{DDF}, $\Lcp(\bfP,\la)$ can be regarded as a $\afUnrv$-module via the map $\zeta_r$ given in \eqref{zetar}.
For $\mu\in\Lanr$ let $\Lcp(\bfP,\la)_\mu=\bfone_\mu\Lcp(\bfP,\la)$.
We choose a nonzero vector $w_0$ in $\Lcp(\bfP,\la)_\la$. Let $\Lcp_{\mpZ}(\bfP,\la)=\afUnrZ w_0$ and $\Lcp_\vep(\bfP,\la)=\Lcp_{\mpZ}(\bfP,\la)\ot_\mpZ\mbc$.
Then $$\Lcp_\mpZ(\bfP,\la)=\bigoplus_{\mu\unlhd\la\atop\mu\in\Lanr}
\Lcp_\mpZ(\bfP,\la)_\mu\ \text{and}\
\Lcp_\vep(\bfP,\la)=\bigoplus_{\mu\unlhd\la\atop\mu\in\Lanr}
\Lcp_\vep(\bfP,\la)_\mu
$$
where $\Lcp_\mpZ(\bfP,\la)_\mu=\bfone_\mu\Lcp_\vep(\bfP,\la)$ and $\Lcp_\vep(\bfP,\la)_\mu=\bfone_\mu\Lcp_\vep(\bfP,\la)$.

By the proof of \cite[8.2]{CP97} and \cite[2.5]{FM2} we see that $\afUslZ w_0$ is a free $\mpZ$-module. This implies that $\Lcp_{\mpZ}(\bfP,\la)$ is a free $\mpZ$-module,
since $\Lcp_{\mpZ}(\bfP,\la)=\afUnZ w_0=\afUslZm w_0=\afUslZ w_0$. It follows that $\Lcp_{\mpZ}(\bfP,\la)_\mu$ is a free $\mpZ$-module and hence $\dim_{\,\mbc} \Lcp_\vep(\bfP,\la)_\mu=\dim_{\,\mbcv}\Lcp(\bfP,\la)_\mu$ for $\mu\in\Lanr$. In particular, $\dim_{\,\mbc} \Lcp_\vep(\bfP,\la)_\la=\dim_{\,\mbcv} \Lcp(\bfP,\la)_\la=1$. Therefore, $\Lcp_\vep(\bfP,\la)$ has a unique irreducible quotient $\afUnrvep$-module, denoted by $\bar V_\vep'(\bfP,\la)$. Since $\Vcp_\vep'(\bfP,\la)$ is a homomorphic image of $\bar M_\vep(\bfP,\la)$ we conclude that $\bar V_\vep (\bfP,\la)\cong\bar V_\vep'(\bfP,\la)$ is a $\afUnrvep$-module.
It follows from \eqref{image of afuslvep} that
$\bar V_\vep(\bfP,\la)|_{\afUslvep}$ is irreducible, and hence
$\bar V_\vep(\bfP,\la)|_{\afUslvep}\cong\bar V_\vep(\bfP)$.
\end{proof}

\begin{Lem}\label{Lem2 for classification Uvtg vep(n,r)}
Let $V$ be a finite dimensional irreducible $\afUnrvep$-module.
Then there
exist $\bfP=(P_1(u),\ldots,P_{n-1}(u))\in\sP(n)$ and $\la\in\La^+(n,r)$ with $\la_i-\la_{i+1}
=\mathrm{deg}P_i(u)$, for all $1\leq i\leq n-1$, such that $V\cong
\Vcp_\vep(\bfP,\la)$.
\end{Lem}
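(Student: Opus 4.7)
The plan is to restrict $V$ to $\afUslvep$, invoke the classification of finite-dimensional irreducible $\afUslvep$-modules in Theorem \ref{classification quantum affine sln}, and then recover the partition $\la$ from the weight decomposition of $V$ given by the idempotents $\bfone_\la$. By \eqref{image of afuslvep} the composite $\afUslvep\hookrightarrow\afUnvep\twoheadrightarrow\afUnrvep$ is surjective, so $V$ remains irreducible when regarded as an $\afUslvep$-module. Theorem \ref{classification quantum affine sln} therefore supplies some $\bfP=(P_1(u),\dots,P_{n-1}(u))\in\Pn$ with $V\cong\Vcp_\vep(\bfP)$ as $\afUslvep$-modules; write $\mu_j=\deg P_j(u)$ for $1\le j\le n-1$.

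The idempotents $\bfone_\la=\zeta_{r,\vep}\bigl(\big[{\ttk_1;0\atop\la_1}\big]\cdots\big[{\ttk_n;0\atop\la_n}\big]\bigr)\in\afUnrvep$ with $\la\in\Lanr$ act on $V$ and, by \eqref{bfone}, yield a direct-sum decomposition $V=\bigoplus_{\la\in\Lanr}V_\la$ with $V_\la=\bfone_\la V$; on $V_\la$ the operators $\ttk_j$ and $\big[{\ttk_j;0\atop l}\big]_\vep$ act as the scalars $\vep^{\la_j}$ and $\big[{\la_j\atop l}\big]_\vep$ respectively. Let $V^+=\{w\in V\mid(\ttx^+_{i,s})^{(m)}w=0\text{ for all }i,s,m\}$; the defining relations of $\Vcp_\vep(\bfP)$ force $\dim V^+=1$. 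The commutation relation $\ttk_j(\ttx^+_{i,s})^{(m)}=v^{m(\delta_{j,i}-\delta_{j,i+1})}(\ttx^+_{i,s})^{(m)}\ttk_j$ shows that each $\ttk_j$ stabilises $V^+$, so the one-dimensional space $V^+$ lies in a single weight component $V_{\la^0}$ for some uniquely determined $\la^0\in\Lanr$.

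Fix a nonzero vector $w_0\in V^+\subseteq V_{\la^0}$. On one hand, $V\cong\Vcp_\vep(\bfP)$ gives $\ms P_{j,s}w_0=P_{j,s}w_0$, $\ti\ttk_jw_0=\vep^{\mu_j}w_0$ and $\big[{\ti\ttk_j;0\atop l}\big]_\vep w_0=\big[{\mu_j\atop l}\big]_\vep w_0$; on the other, $\ti\ttk_j=\ttk_j\ttk_{j+1}^{-1}$ together with $w_0\in V_{\la^0}$ yields $\ti\ttk_jw_0=\vep^{\la^0_j-\la^0_{j+1}}w_0$ and $\big[{\ti\ttk_j;0\atop l}\big]_\vep w_0=\big[{\la^0_j-\la^0_{j+1}\atop l}\big]_\vep w_0$. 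Comparing the two sets of eigenvalues, Corollary \ref{m=m'} forces $\la^0_j-\la^0_{j+1}=\mu_j\ge 0$, so $\la^0\in\La^+(n,r)$ and the required degree condition holds.

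Finally, the identities collected above say precisely that $w_0$ satisfies all the defining relations of $\Mcp_\vep(\bfP,\la^0)$, producing an $\afUnvep$-surjection $\Mcp_\vep(\bfP,\la^0)\twoheadrightarrow\afUnvep\cdot w_0$. Since $V$ is $\afUslvep$-irreducible we have $\afUslvep\cdot w_0=V$ and hence $\afUnvep\cdot w_0=V$; irreducibility of $V$ combined with the uniqueness of $\Vcp_\vep(\bfP,\la^0)$ as the irreducible quotient of $\Mcp_\vep(\bfP,\la^0)$ forces $V\cong\Vcp_\vep(\bfP,\la^0)$. The main obstacle is the weight matching in the third paragraph: $\vep^{\la^0_j-\la^0_{j+1}}=\vep^{\mu_j}$ on its own only determines $\la^0_j-\la^0_{j+1}$ modulo $l'$, and it is precisely the accompanying equation $\big[{\la^0_j-\la^0_{j+1}\atop l}\big]_\vep=\big[{\mu_j\atop l}\big]_\vep$, read through Corollary \ref{m=m'}, that upgrades this congruence to an honest equality---which is exactly the reason the divided-Cartan generator $\big[{\ti\ttk_j;0\atop l}\big]$ is built into the definition of $\Icp_\vep(\bfP)$.
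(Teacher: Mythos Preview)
Your argument is correct and follows essentially the same route as the paper's: restrict to $\afUslvep$ via \eqref{image of afuslvep}, invoke Theorem \ref{classification quantum affine sln} to obtain $\bfP$, locate a highest-weight vector in some $\bfone_\la$-component, and compare eigenvalues through Corollary \ref{m=m'} to force $\la_j-\la_{j+1}=\mu_j$.

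The only difference is tactical, in how you place the highest-weight vector inside a single $V_{\la^0}$. The paper does not assert $\dim V^+=1$; it simply takes the highest-weight vector $w_0$ supplied by $V\cong\Vcp_\vep(\bfP)$, notes that $w_0=\sum_{\nu}\bfone_\nu w_0$, picks any $\la$ with $\bfone_\la w_0\neq 0$, and checks that $\bfone_\la w_0$ still satisfies all the relations. This is slightly more economical, since the claim $\dim V^+=1$ (while true) needs an extra line of justification via irreducibility and the triangular decomposition rather than just ``the defining relations''. It also sidesteps a small gap in your second paragraph: at a root of unity, distinct $\la,\la'\in\Lanr$ can share all $\ttk_j$-eigenvalues $\vep^{\la_j}$, so $\ttk_j$-stability of $V^+$ alone does not force $V^+\subseteq V_{\la^0}$; you really need stability under the idempotents $\bfone_\la$ themselves (which follows from the analogous commutation relations for $\big[{\ttk_j;0\atop t}\big]$).
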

\begin{proof}
According to \eqref{image of afuslvep}, we see that $V$ is an irreducible $U_\vep(\widehat{\frak{sl}}_n)$-module. It follows from Theorem \ref{classification quantum affine sln} that there exist $\bfP\in\Pn$ such that $V\cong\bar V_\vep(\bfP)$. Thus there exist $w_0\not=0\in V$ such that
$$(\ttx^+_{i,s})^{(m)}w_0=0, \msP_{i,s}w_0=P_{i,s}w_0,\; \ti\ttk_iw_0=\vep^{\mu_i}w_0,\;\text{ and }\;\bigg[{\ti\ttk_i;0\atop l}\bigg] w_0=\bigg[{\mu_i\atop l}\bigg]_\vep w_0,$$ for
all $1\leq i\leq n-1$ and $s\in\mbz$, where
$\mu_i=\mathrm{deg}P_i(u)$. Since
 $\sum_{\nu\in\Lanr}\bfone_\nu
w_0=w_0\not=0$, there exists $\la\in\Lanr$ such that
$\bfone_\la w_0\not=0$.

Clearly, we have
$$(\ttx^+_{i,s})^{(m)}\bfone_\la  w_0=0, \msP_{i,s}\bfone_\la w_0=P_{i,s}\bfone_\la w_0,\; \ti\ttk_i \bfone_\la w_0=\vep^{\mu_i}\bfone_\la w_0,\;\text{ and }\;\bigg[{\ti\ttk_i;0\atop l}\bigg] \bfone_\la w_0=\bigg[{\mu_i\atop l}\bigg]_\vep \bfone_\la w_0.$$
On the other hand, by \eqref{bfone} we have  $\ti\bffkk_{i}\bfone_\la=\vep^{\la_i-\la_{i+1}}\bfone_\la$ and $\big[{\ti\bffkk_{i};0\atop l}\big]\bfone_\la=\big[{\la_i-\la_{i+1}\atop l}\big]_\vep\bfone_\la$, where $\ti\bffkk_{i}=\zeta_{r,\vep}(\ti\ttk_i)$ and $\big[{\ti\bffkk_{i};0\atop l}\big]=\zeta_{r,\vep}(\big[{\ti\ttk_{i};0\atop l}\big])$. This implies that $\ti\ttk_i\bfone_\la w_0=\vep^{\la_i-\la_{i+1}}\bfone_\la
w_0$ and $\big[{\ti\ttk_{i};0\atop l}\big]_\vep\bfone_\la w_0=\big[{\la_i-\la_{i+1}\atop l}\big]_\vep\bfone_\la w_0$. So by Corollary \ref{m=m'} we have $\la_i-\la_{i+1}=\mu_i$, for $1\leq i\leq n-1$. This shows that $\la\in\La^+(n,r)$. Clearly, there
is a surjective $\afUnvep$-module homomorphism $\vi:\Mcp_\vep(\bfP,\la)\ra
V$ defined by sending $\bar u$ to $u\bfone_\la w_0$, for all $u\in\afUnvep$. Therefore, $V\cong \Vcp_\vep(\bfP,\la)$, as desired.
\end{proof}

According to Lemma \ref{Lem1 for classification Uvtg vep(n,r)} and Lemma \ref{Lem2 for classification Uvtg vep(n,r)} we obtain the following classification theorem of finite dimensional irreducible $\afUnrvep$-modules.
\begin{Thm}\label{classification Uvtg vep(n,r)}
The set $$\{\bar V_\vep(\bfP,\la)\mid\bfP\in\Pn,\,\la\in\La^+(n,r),\,\la_i-\la_{i+1}=\deg P_i(u)\,for\,1\leq i\leq n-1\}$$
is a complete set of nonisomorphic finite dimensional irreducible $\afUnrvep$-modules.
\end{Thm}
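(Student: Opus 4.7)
The plan is to deduce the theorem as an immediate consequence of Lemma \ref{Lem1 for classification Uvtg vep(n,r)} and Lemma \ref{Lem2 for classification Uvtg vep(n,r)}, with only a small additional argument needed for pairwise non-isomorphism. The structure mirrors the classical Cartan-theoretic classification: Lemma \ref{Lem1 for classification Uvtg vep(n,r)} guarantees that each labelled module genuinely lives in the category $\afUnrvep\hmod$, while Lemma \ref{Lem2 for classification Uvtg vep(n,r)} says that no irreducible module in that category escapes the list.

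First I would invoke Lemma \ref{Lem1 for classification Uvtg vep(n,r)} directly: for every pair $(\bfP,\la)$ with $\bfP\in\Pn$, $\la\in\La^+(n,r)$ and $\la_i-\la_{i+1}=\deg P_i(u)$, the module $\bar V_\vep(\bfP,\la)$ factors through $\zeta_{r,\vep}:\afUnvep\twoheadrightarrow\afUnrvep$, so it is a finite dimensional irreducible $\afUnrvep$-module. Conversely, given any finite dimensional irreducible $\afUnrvep$-module $V$, inflation along $\zeta_{r,\vep}$ makes it a finite dimensional irreducible $\afUnvep$-module, and Lemma \ref{Lem2 for classification Uvtg vep(n,r)} produces the required $(\bfP,\la)$ with $V\cong\bar V_\vep(\bfP,\la)$.

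The remaining point is that distinct pairs $(\bfP,\la)\ne(\bfP',\la')$ on the list yield non-isomorphic modules. Suppose $\bar V_\vep(\bfP,\la)\cong\bar V_\vep(\bfP',\la')$. Restricting along $\afUslvep\hookrightarrow\afUnvep$ and applying the identification $\bar V_\vep(\bfP,\la)|_{\afUslvep}\cong\bar V_\vep(\bfP)$ from Lemma \ref{Lem1 for classification Uvtg vep(n,r)}, Theorem \ref{classification quantum affine sln} forces $\bfP=\bfP'$. Then $\la_i-\la_{i+1}=\deg P_i(u)=\la'_i-\la'_{i+1}$ for $1\leq i\leq n-1$, and together with $\sum_i\la_i=r=\sum_i\la'_i$ this uniquely determines $\la=\la'$; alternatively one can read off $\la_j$ directly from the eigenvalues of $\bffkk_j$ and $\big[{\bffkk_j;0\atop l}\big]$ on the highest weight vector and apply Corollary \ref{m=m'}.

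I do not foresee a genuine obstacle: the hard analytic content (freeness over $\mpZ$, identification of restrictions to $\afUslvep$, behaviour of the idempotents $\bfone_\la$) has already been packaged into the two lemmas and into \eqref{image of afuslvep}. The only subtle point is ensuring that $\la$ is recoverable from the isomorphism class, which is precisely what Corollary \ref{m=m'} (or, more elementarily, the combined linear system for $\la$) supplies.
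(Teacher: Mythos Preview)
Your proposal is correct and follows exactly the paper's approach: the paper simply states that the theorem follows from Lemma~\ref{Lem1 for classification Uvtg vep(n,r)} and Lemma~\ref{Lem2 for classification Uvtg vep(n,r)}, leaving the non-isomorphism argument implicit. Your explicit verification that distinct pairs $(\bfP,\la)$ give non-isomorphic modules---via restriction to $\afUslvep$ and the linear system for $\la$---is a welcome addition that the paper omits.
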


\subsection{Classification of finite dimensional irreducible $\afSrvep$-modules}

Combining Proposition \ref{prop1 of sGvep}, Proposition \ref{prop2 of sGvep} with Theorem \ref{classification Uvtg vep(n,r)} yields the following classification theorem of finite dimensional irreducible $\afSrvep$-modules.

\begin{Thm}\label{classification afSrvep}
The set $$\{V_\vep(\bfQ)\mid\bfQ\in\Qnr\}$$ is a complete set of nonisomorphic finite dimensional irreducible $\afSrvep$-modules.
\end{Thm}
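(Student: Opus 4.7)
The plan is to reduce to the classification of irreducible $\afUNrvep$-modules in Theorem \ref{classification Uvtg vep(n,r)} by passing to large $N$ and descending via $\sGvep$.  Fix $N\geq\max(n,r+1)$.  Then $N>r$ gives $\afSNrvep=\afUNrvep$ by \cite[8.2]{Lu99}, so Theorem \ref{classification Uvtg vep(n,r)} applied with $n$ replaced by $N$ lists the irreducible $\afSNrvep$-modules as the $\bar V_\vep(\bfP,\la)$ with $\bfP\in\mathpzc P(N)$, $\la\in\La^+(N,r)$, $\la_i-\la_{i+1}=\deg P_i(u)$; every such $\la$ has $\la_N=0$.

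The key step is to identify each $\bar V_\vep(\bfP,\la)$ with some $V_\vep(\bfQ')$, $\bfQ'\in\QNr$.  Given $(\bfP,\la)$ with $\la_N=0$, set $Q'_N=1$ and define recursively $Q'_j(u)=P_j(uv^{1-j})Q'_{j+1}(uv^2)$ for $j=N-1,\ldots,1$, which produces $\bfQ'\in\QNr$ with $\deg\bfQ'=\la$.  Using the identity $\ms P_j^\pm(u)=\ms Q_j^\pm(v^{j-1}u)/\ms Q_{j+1}^\pm(v^{j+1}u)$ together with $\ms Q_{i,s}w_{\bfQ'}=Q'_{i,s}w_{\bfQ'}$, the highest weight vector of $V_\vep(\bfQ')$ satisfies $\ms P_{j,s}w=P_{j,s}w$, $\ttk_j w=\vep^{\la_j}w$, and, via Corollary \ref{m=m'}, $\big[{\ttk_j;0\atop l}\big]_\vep w=\big[{\la_j\atop l}\big]_\vep w$.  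Irreducibility of $V_\vep(\bfQ')$ combined with Theorem \ref{classification Uvtg vep(n,r)} then force $V_\vep(\bfQ')\cong\bar V_\vep(\bfP,\la)$; conversely, when $\la_N=0$ the recipe recovers $\bfQ'$ uniquely from $(\bfP,\la)$.

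Given an arbitrary finite dimensional irreducible $\afSrvep$-module $V$, the isomorphism $e\afSNrvep e\cong\afSrvep$ together with \cite[6.2(c)]{Gr80} yields a unique irreducible $\afSNrvep$-module $W$ with $\sGvep(W)=eW\cong V$; by the previous paragraph $W\cong V_\vep(\bfQ')$ for some $\bfQ'\in\QNr$.  Since $\sGvep(V_\vep(\bfQ'))\cong V\neq 0$, Proposition \ref{prop1 of sGvep} forces $\deg\bfQ'\in\ti\La(n,r)$, so $\bfQ'=\tibfQ$ for a unique $\bfQ\in\Qnr$, and Proposition \ref{prop2 of sGvep} delivers $V\cong V_\vep(\bfQ)$.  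For pairwise non-isomorphism, an isomorphism $V_\vep(\bfQ)\cong V_\vep(\bfQ')$ in $\afSrvep\hmod$ lifts via Proposition \ref{prop2 of sGvep} and the uniqueness part of \cite[6.2(c)]{Gr80} to $V_\vep(\tibfQ)\cong V_\vep(\tibfQ')$ in $\afSNrvep\hmod$; since both $\tibfQ$ and $\tibfQ'$ have zero last coordinate, they are uniquely determined by their common $(\bfP,\la)$-data, forcing $\bfQ=\bfQ'$.  The main obstacle is the identification in the second paragraph: it requires carefully unwinding the generating-function definition of $\ms P_j$ in terms of $\ms Q_i$, and invoking Corollary \ref{m=m'} to simultaneously match the eigenvalues $\vep^{\la_j}$ of $\ttk_j$ and $\big[{\la_j\atop l}\big]_\vep$ of $\big[{\ttk_j;0\atop l}\big]_\vep$ that together characterize irreducibles in the root-of-unity setting.
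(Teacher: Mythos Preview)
Your argument is correct and follows essentially the same route as the paper's proof: pass to $N>r$ so that $\afSNrvep=\afUNrvep$, identify the irreducibles there with the $V_\vep(\bfQ')$ for $\bfQ'\in\QNr$ via Theorem~\ref{classification Uvtg vep(n,r)}, and then descend through $\sGvep$ using Propositions~\ref{prop1 of sGvep} and~\ref{prop2 of sGvep}. The paper runs the bijection in the direction $\bfQ\mapsto(\bfP,\la)$ with $P_i(u)=Q_i(v^{i-1}u)/Q_{i+1}(v^{i+1}u)$, while you construct the inverse recursion $(\bfP,\la)\mapsto\bfQ'$; these are equivalent, and the paper then invokes \cite[6.2(g)]{Gr80} (rather than 6.2(c)) to transport the classification down to $\afSrvep$.

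Two minor remarks. First, the relation $\big[{\ttk_j;0\atop l}\big]_\vep w=\big[{\la_j\atop l}\big]_\vep w$ does not require Corollary~\ref{m=m'}: it follows directly by specializing $\big[{\ttk_j;0\atop l}\big]w_{\bfQ'}=\big[{\la_j\atop l}\big]_v w_{\bfQ'}$ from the generic level (Corollary~\ref{m=m'} is used the other way around, in the proof of Lemma~\ref{Lem2 for classification Uvtg vep(n,r)}, to recover $\la$ from the eigenvalues). Second, to pass from $\deg\bfQ'\in\ti\La(n,r)$ to $\bfQ'\in\tiQnr$ you are implicitly using that $Q'_j=1$ for $j>n$ forces $(Q'_1,\ldots,Q'_n)\in\Qnr$; this is immediate from the definition of $\QNr$ but worth stating.
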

\begin{proof}
We choose $N$ such that $N>\max\{n,r\}$.
Since $N>r$, by \cite[8.2]{Lu99} we have $\afSNrvep=\afUNrvep$.
It follows that $V_\vep(\bfQ)$ is a homomorphic image of $\bar M_\vep(\bfP,\la)$ for  $\bfQ\in\QNr$, where $P_i(u)=Q_i(\ttv^{i-1}u)/Q_{i+1}(\ttv^{i+1}u)$ for $1\leq i\leq N-1$ and $\la=\deg\bfQ$. This implies that $$V_\vep(\bfQ)\cong \Vcp_\vep(\bfP,\la)$$ as an $\afSNrvep$-module. Thus by Theorem \ref{classification Uvtg vep(n,r)} the set $$\{ V_\vep(\bfQ)\mid\bfQ\in\QNr\}$$
is a complete set of nonisomorphic finite dimensional irreducible $\afSNrvep$-modules. This together with \cite[6.2(g)]{Gr80} implies that the set $\{\sGvep(V_\vep(\bfQ))\not=0\mid\bfQ\in\QNr\}$ forms a complete set of nonisomorphic irreducible $\afSrvep$-modules. Now the assertion follows from Proposition \ref{prop1 of sGvep} and Proposition \ref{prop2 of sGvep}.
\end{proof}
\begin{Rems}
(1) If $\vep=1$, then $\afSrvep$ is the affine Schur algebra over $\mbc$. Therefore, we obtain a classification theorem for affine Schur algebras over $\mbc$ by Theorem \ref{classification afSrvep}.

(2) Assume $n\geq r$. Let $\varpi=(\varpi_1,\cdots,\varpi_n)\in\Lanr$, where $\varpi_i=1$ if $1\leq i\leq r$, and $\varpi_i=0$ otherwise. Then $\bfone_\varpi\afSrvep\bfone_\varpi\cong\afHrvep$. According to Theorem \ref{classification afSrvep} and \cite[6.2(g)]{Gr80}, the set $$\{\bfone_\varpi V_\vep(\bfQ)\not=0\mid\bfQ\in\Qnr\}$$
forms a complete set of  nonisomorphic finite dimensional irreducible
$\afHrvep$-modules. It would be interesting to determine the necessary and sufficient conditions for $\bfone_\varpi V_\vep(\bfQ)$ to be nonzero.

In the non root of unity case, finite dimensional irreducible modules for affine Hecke algebras of type $A$ were classified in terms of  multisegments (see \cite{BZ,Zelevinsky,Rogawski}). Finite dimensional irreducible modules for affine Hecke algebras of type $A$ at roots of unity
were classified in terms of
the aperiodic multisegments (see \cite{CG,Ariki,AM}).
The classification of irreducible modules for affine Hecke algebras in all types
were investigated in \cite{KL87,Xi94,Xi07}. It should be interesting to determine the aperiodic multisegments corresponding to $\bfone_\varpi V_\vep(\bfQ)$ when $\bfone_\varpi V_\vep(\bfQ)$ is nonzero.

(3) Following \cite[7.2]{FM}, let $\afUglZ$ be the $\mpZ$-subalgebra of $\afUglv$ generated by
$(\ttx_{i,s}^\pm)^{(m)}$, $\ttk_j^{\pm 1}$,
$\big[{\ttk_j;0\atop t}\big]$, $\ms Q_{i,s}$ ($1\leq i\leq n-1$, $1\leq j\leq n$, $m,t\in\mbn$, $s\in\mbz$). Let $\afUglvep=\afUglZ\ot_\mpZ\mbc$.

Let $\Pi$ be the set of polynomials $Q(u)\in\mbc[u]$ such that the constant term of $Q(u)$ is $1$. Let
$\Qn_\vep$ be the set of $n$-tuple of polynomials $(Q_1(u),\cdots,Q_n(u))$ such that $Q_i(u)\in\Pi$ and ${Q_j(u)}/{Q_j(u\vep^2)}\in\mbc[u]$ for
$1\leq i\leq n$ and $1\leq j\leq n-1$.
According to \cite[8.2]{FM}, finite dimensional polynomial irreducible $\afUglvep$-modules are indexed by the set $\Qn_\vep$.
Let
$$\Qnr_\vep=\bigg\{(Q_1(u),\cdots,Q_n(u))\in\Qn_\vep
\,\big|\,\sum_{1\leq i\leq n}\deg Q_i(u)=r\bigg\}.$$
Then the natural map $\ga:\Qnr\ra\Qnr_\vep$ defined by sending $(Q_1(u),\cdots,Q_n(u))$ to $(Q_1(u)|_{v=\vep},
\cdots,Q_n(u)|_{v=\vep})$ is bijective. Therefore, polynomial representation of $\afUglvep$  should be closely related to representation of $\afSrvep$.
\end{Rems}

\subsection{Morita equivalences of affine quantum Schur algebras}
We are now ready to establish Morita equivalences between affine quantum Schur algebras.
Before proving Proposition \ref{category equivalence of affine quantum Schur algebras}  we shall need some preliminary lemmas.

Let $S$ be an associative algebra over a field $k$ and let
$\mpf\not=0$ be any idempotent in $S$. We define the functors $\sF$ and $\sH$ as follows:
\begin{equation*}
\begin{split}
\sF:S&\hmod\lra\mpf S\mpf\hmod,\qquad
V\longmapsto \mpf V,\\
\sH:\mpf S\mpf&\hmod\lra S \hmod,\qquad
W\longmapsto S\mpf \ot_{\mpf S\mpf}W.
\end{split}
\end{equation*}
\begin{Lem}\label{S mpf V}
Assume $\sF(L)\not=0$ for all finite dimensional irreducible $S$-module $L$.
Then $S\mpf V=V$ for $V\in S\hmod$.
\end{Lem}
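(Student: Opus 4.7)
The plan is to show that the quotient $W := V/S\mpf V$ must be zero by exhibiting a contradiction with the hypothesis that every finite dimensional irreducible $S$-module $L$ satisfies $\mpf L \neq 0$.

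First I would check that $S\mpf V$ is indeed an $S$-submodule of $V$: it is the $k$-span of $\{s\mpf v : s\in S,\, v\in V\}$, and $S\cdot (S\mpf V)\subseteq S\mpf V$ because $S\cdot S = S$. Thus $W = V/S\mpf V$ is a well-defined $S$-module quotient, and since $V$ is finite dimensional, so is $W$.

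Next I would compute $\mpf W$. Because $\mpf = \mpf\cdot 1 \in S\mpf$, for every $v\in V$ we have $\mpf v \in S\mpf V$, hence $\mpf V \subseteq S\mpf V$. Passing to the quotient gives $\mpf\cdot W = 0$.

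Now suppose for contradiction that $W\neq 0$. Since $W$ is finite dimensional, it admits an irreducible quotient $S$-module $L$, say via a surjection $W \twoheadrightarrow L$. Applying the left-exact (in fact exact, since left multiplication by an idempotent preserves surjections) functor $\mpf\cdot$ yields a surjection $\mpf W \twoheadrightarrow \mpf L$. As $\mpf W = 0$, this forces $\sF(L) = \mpf L = 0$, contradicting the assumption. Therefore $W = 0$, i.e.\ $S\mpf V = V$. No step looks delicate; the only point requiring a little care is the observation $\mpf V \subseteq S\mpf V$ that underlies the computation $\mpf W = 0$.
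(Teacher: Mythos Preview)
Your proof is correct. It differs from the paper's argument, which instead takes a composition series $0=V_t\subset\cdots\subset V_0=V$, observes that each natural map $S\mpf V_i/S\mpf V_{i+1}\to V_i/V_{i+1}$ has nonzero image (because $\mpf(V_i/V_{i+1})\neq 0$) and hence is surjective by irreducibility, and then sums dimensions to get $\dim S\mpf V\geq\dim V$. Your argument is more direct: by passing immediately to the quotient $W=V/S\mpf V$ and noting $\mpf W=0$, you only need a single irreducible quotient of $W$ (rather than a full filtration of $V$) to reach a contradiction. The paper's approach, on the other hand, makes the layer-by-layer structure explicit and is closer in spirit to the subsequent Lemma~\ref{alpha V}, which is proved inductively along a composition series. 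Both arguments rely on the same elementary observation $\mpf V\subseteq S\mpf V$ (equivalently, $\mpf=\mpf^2\in S\mpf$).
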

\begin{proof}
Let $V$ be a finite dimensional $S$-module and let $0=V_t\han V_{t-1}\han\cdots\han V_1\han V_0=V$ be the composition series of $V$.
For $0\leq i\leq t-1$, there is a natural $S$-module homomorphism   $\vi_i:S\mpf V_i/S\mpf V_{i+1}\ra V_i/V_{i+1}$ defined by sending $\bar x$ to $\bar x$ for $\bar x\in S\mpf V_i/S\mpf V_{i+1}$. Since $\sF (V_i/V_{i+1})\not=0$ we have $\text{Im}(\vi_i)\not=0$.
It follows that $\vi_i$ is surjective since $V_i/V_{i+1}$ is irreducible. This implies that
$\dim S\mpf V=\sum_{0\leq i\leq t-1}\dim(S\mpf V_i/S\mpf V_{i+1})\geq\sum_{0\leq i\leq t-1}\dim(V_i/V_{i+1})=\dim V$. Consequently, we conclude that $S\mpf V=V$.
\end{proof}

\begin{Lem}\label{alpha V}
Assume $\sF(L)\not=0$ for all finite dimensional irreducible $S$-module $L$.
Then for each $V\in S\hmod$, the $S$-module homomorphism $\alpha_V:\sH\circ\sF(V)\ra V$ defined by sending $x \ot \mpf w$ to $x\mpf w$ is an isomorphism for $x\in S\mpf$ and $w\in V$.
\end{Lem}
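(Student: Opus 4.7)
The plan is to show surjectivity and injectivity of $\alpha_V$ separately, handling irreducible $V$ first and then reducing the general case by d\'evissage on composition length. Surjectivity is the easy half: the image of $\alpha_V$ consists of all elements $s\mpf w$ with $s\in S$, $w\in V$, hence equals $S\mpf V$, which is all of $V$ by Lemma \ref{S mpf V}.

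For injectivity when $V$ is irreducible, the key observation I would make is the identity
\[
\mpf\eta=\mpf\otimes\mpf\alpha_V(\eta)\qquad(\eta\in\sH\circ\sF(V)),
\]
obtained by writing a representative $\eta=\sum_i x_i\otimes v_i$ with $x_i\in S\mpf$, $v_i\in\mpf V$, and pushing each $\mpf x_i\in\mpf S\mpf$ across the tensor product via the defining $\mpf S\mpf$-bilinearity. Setting $K:=\ker\alpha_V$, this identity gives $\mpf K=0$; since $K$ is an $S$-submodule, the same reasoning applied to elements of $S\eta\subseteq K$ yields $\mpf(S\eta)=0$ for every $\eta\in K$. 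If $K$ were nonzero I would pick $0\ne\eta\in K$; the cyclic module $S\eta$ admits an irreducible quotient $L$ by Zorn's lemma, and $\mpf(S\eta)=0$ forces $\mpf L=0$, i.e.\ $\sF(L)=0$, contradicting the standing hypothesis. Hence $K=0$.

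For a general $V\in S\hmod$ I would proceed by induction on the composition length. Applying the right exact functor $\sH\circ\sF$ --- right exact because $\sF=\mpf\cdot$ is exact for an idempotent, and $\sH=S\mpf\otimes_{\mpf S\mpf}-$ is right exact --- to a short exact sequence $0\to V'\to V\to V''\to 0$ with $V''$ irreducible produces a commutative ladder with exact rows in which the outer verticals $\alpha_{V'}$ and $\alpha_{V''}$ are isomorphisms by induction; a standard diagram chase, combined with the surjectivity of $\alpha_V$ already established via Lemma \ref{S mpf V}, then forces $\alpha_V$ to be an isomorphism.

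The step I expect to be the main obstacle is the irreducible injectivity, because $\sH\circ\sF(V)$ need not be finite-dimensional --- the module $S\mpf$ is typically infinite-dimensional in the intended applications such as $S=\afSNrvep$ --- so one cannot simply pick a minimal nonzero submodule of $K$. Working with the cyclic $S\eta$ and its guaranteed irreducible quotient circumvents any Artinian or finite-length requirement on $\sH\circ\sF(V)$.
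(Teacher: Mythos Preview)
Your surjectivity argument and your inductive d\'evissage are both correct and match the paper's approach closely: the paper also inducts on composition length, invokes Lemma~\ref{S mpf V} for surjectivity, and uses right exactness of $\sH\circ\sF$ together with injectivity of $\sH\circ\sF(\iota)$ (deduced from the commuting square with $\alpha_M$, exactly as in your diagram chase) to compare dimensions in the inductive step.

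The genuine gap is in your treatment of the irreducible base case. You correctly derive $\mpf K=0$ from the identity $\mpf\eta=\mpf\otimes\mpf\alpha_V(\eta)$; this is essentially the content of $\sF\circ\sH\cong 1$ from \cite[(6.2d)]{Gr80}. But the contradiction you extract does not follow: the irreducible quotient $L$ of the cyclic module $S\eta$ furnished by Zorn's lemma need not be finite-dimensional, while the standing hypothesis only asserts $\sF(L)\neq 0$ for \emph{finite-dimensional} irreducible $L$. Your closing paragraph anticipates the infinite-dimensionality issue but misdiagnoses where it bites: passing to a cyclic submodule guarantees a maximal submodule and hence an irreducible quotient, but says nothing about that quotient lying in $S\hmod$. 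Since $S$ may well be infinite-dimensional (as $\afSNrvep$ is), there is no a~priori bound on $S\eta$ or on its simple quotients, so you cannot invoke the hypothesis.

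The paper does not attempt a direct argument here; it simply quotes \cite[(6.2f)]{Gr80} for the irreducible case and then runs the same induction you describe. If you want a self-contained fix, one option is to reorganize so that finiteness of $\dim\sH\circ\sF(V)$ is what is proved inductively, with the base case handled by Green's result rather than by your kernel argument.
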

\begin{proof}
We proceed by induction on $\ell(V)$, where $\ell(V)$ is the length of $V$. According to \cite[(6.2f)]{Gr80} we see that
$\al_V$ is an isomorphism in the case where $V$ is irreducible.
Assume now that $\ell(V)>1$. Using Lemma \ref{S mpf V} we see that $\al_V$ is surjective. Thus it is enough to prove that $\dim(\sH\circ\sF(V))=\dim V$.
Let $M$ be a maximal $S$-submodule of $V$. Then $\al_M$ is an isomorphism by induction.
Let $\iota$ be the inclusion map from $M$ to $V$.  Since $\iota\circ\al_M=\al_V\circ(\sH\circ\sF(\iota))$ and $\al_M$ is injective we conclude that the map $\sH\circ\sF(\iota)$ is injective. Thus we may regard $\sH\circ\sF(M)$ as a submodule of $\sH\circ\sF(V)$. As $V/M$ is irreducible, we see that $\sH\circ\sF(V)/\sH\circ\sF(M)\cong\sH\circ\sF(V/M)\cong V/M$. This together with the fact that $\sH\circ\sF(M)\cong  M$ shows that $\dim(\sH\circ\sF(V))=\dim V$. The assertion follows.
\end{proof}

\begin{Coro}\label{category equivalence}
Assume $\sF(L)\not=0$ for all finite dimensional irreducible $S$-module $L$.
Then the functor $\sF:S\hmod\ra\mpf S\mpf\hmod$ is an equivalence of categories.
\end{Coro}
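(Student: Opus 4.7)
The plan is to exhibit $\sH$ as a two-sided inverse (up to natural isomorphism) to $\sF$, realising $(\sH,\sF)$ as an adjoint equivalence of categories. The starting observation is that $\sH$ is canonically left adjoint to $\sF$: the standard identification $\Hom_S(S\mpf,V)\cong\mpf V=\sF(V)$ (via $\vi\mapsto\vi(\mpf)$) combined with tensor--Hom adjunction gives
\[
\Hom_S(\sH(W),V)\cong\Hom_{\mpf S\mpf}(W,\sF(V))
\]
for all $W\in\mpf S\mpf\hmod$ and $V\in S\hmod$. The counit of this adjunction is precisely the map $\alpha_V:\sH\circ\sF(V)\to V$, $x\otimes\mpf w\mapsto x\mpf w$, which Lemma \ref{alpha V} has already shown to be an isomorphism.

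It then suffices to verify that the unit $\beta_W:W\to\sF\circ\sH(W)$ is also an isomorphism. Explicitly, $\sF\circ\sH(W)=\mpf(S\mpf\otimes_{\mpf S\mpf}W)=\mpf S\mpf\otimes_{\mpf S\mpf}W$, and $\beta_W$ is the map $w\mapsto\mpf\otimes w$. Its inverse is given by the multiplication map $\mpf x\mpf\otimes w\mapsto(\mpf x\mpf)w$; the two compositions are the identity on the nose, using $\mpf\cdot w=w$ (since $\mpf$ is the unit of $\mpf S\mpf$) and the relation $\mpf x\mpf\otimes w=\mpf\otimes(\mpf x\mpf)w$ valid in the tensor product. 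Naturality in $W$ is immediate. Once both unit and counit are isomorphisms, the adjoint pair is an equivalence, and in particular $\sF$ is an equivalence.

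The main obstacle I anticipate is the verification that $\sH$ actually lands in $S\hmod$, i.e.\ that $S\mpf\otimes_{\mpf S\mpf}W$ is finite dimensional whenever $W$ is. This is automatic when $S$ itself is finite dimensional, but in the applications the authors have in mind $S$ may be infinite dimensional. The fix uses the hypothesis $\sF(L)\neq 0$ for every simple $L$: together with \cite[6.2(b)]{Gr80} this shows that $\sF$ induces a bijection on isomorphism classes of simples, so for simple $W=\sF(L)$ one has $\sH(W)\cong \sH\sF(L)\cong L\in S\hmod$ via $\alpha_L$, giving finite dimensionality. For general $W$ of finite length one filters by a composition series and bootstraps, using that $\sF$ preserves lengths under the hypothesis (essentially the argument underlying Lemma \ref{S mpf V}) to bound $\ell(\sH(W))\le\ell(W)$; combined with simples being finite dimensional this yields $\dim\sH(W)<\infty$. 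Once this point is pinned down, assembling $\alpha$ and $\beta$ gives the claimed equivalence.
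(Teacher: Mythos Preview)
Your approach coincides with the paper's: it cites \cite[6.2d]{Gr80} for $\sF\circ\sH\cong 1_{\mpf S\mpf\hmod}$ (precisely your explicit computation that $\beta_W$ is an isomorphism) and invokes Lemma~\ref{alpha V} for $\sH\circ\sF\cong 1_{S\hmod}$, finishing in two lines. Where you go beyond the paper is in asking whether $\sH$ actually takes values in $S\hmod$ when $S$ may be infinite dimensional---a genuine issue for the intended application to affine quantum Schur algebras that the paper passes over in silence. Your outline for this (simple $W$ via $W\cong\sF(L)$ and $\alpha_L$, then general $W$ by right exactness of $\sH$ along a composition series) is the right idea; two small sharpenings: the finite-dimensionality of $\sH(W)$ follows directly from right exactness of the tensor functor together with the simple case, so the detour through ``$\sF$ preserves lengths'' is unnecessary; and knowing that every simple $W\in\mpf S\mpf\hmod$ arises as $\sF(L)$ for some $L\in S\hmod$ requires \cite[6.2(g)]{Gr80}, not just 6.2(b).
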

\begin{proof}
According to \cite[6.2d]{Gr80} we see that $\sF\circ\sH\cong 1_{\mpf S
\mpf\hmod}$ where $1_{\mpf S
\mpf\hmod}$ is the identity functor on $\mpf S
\mpf\hmod$.  Now the assertion follows from Lemma \ref{alpha V}.
\end{proof}

Combining Proposition \ref{prop1 of sGvep}, Theorem \ref{classification afSrvep} with Corollary \ref{category equivalence} yields the following Morita equivalences of affine quantum Schur algebras, which is the affine version of \cite[(6.5g)]{Gr80}.
\begin{Prop}\label{category equivalence of affine quantum Schur algebras}
Assume $N\geq n\geq r$. Then the functor $\sGvep:\afSNrvep\hmod\ra\afSrvep\hmod$
is an equivalence of categories.
\end{Prop}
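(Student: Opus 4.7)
The plan is to apply Corollary \ref{category equivalence} with $S = \afSNrvep$ and $\mpf = e = \sum_{\mu \in \Lanr} \bfone_{\timu}$. Since $e\afSNrvep e \cong \afSrvep$ by construction, and the functor $\sF$ of that corollary then coincides with $\sGvep$, the asserted equivalence will follow at once once I verify the hypothesis of Corollary \ref{category equivalence}: namely, that $\sGvep(L) \neq 0$ for every finite dimensional irreducible $\afSNrvep$-module $L$.

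First I would invoke Theorem \ref{classification afSrvep} with $N$ in the role of $n$: every such irreducible $L$ is isomorphic to some $V_\vep(\bfQ)$ with $\bfQ \in \QNr$. Proposition \ref{prop1 of sGvep} then reduces the hypothesis to the purely combinatorial claim that, under $n \geq r$, every $\bfQ \in \QNr$ satisfies $\deg \bfQ \in \ti\La(n,r)$, equivalently $\deg Q_i = 0$ for all $i > n$.

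This final step is where the hypothesis $n \geq r$ is crucial, and I expect it to be the only genuinely new content of the proof. The dominance condition defining $\mathpzc{Q}(N)$ requires $Q_i(v^{i-1}u)/Q_{i+1}(v^{i+1}u) \in \mbc[u]$, which forces $Q_{i+1}(v^{i+1}u)$ to divide $Q_i(v^{i-1}u)$ in $\mbc[u]$; comparing degrees in $u$ gives $\deg Q_{i+1} \leq \deg Q_i$ for $1 \leq i \leq N-1$. Hence $(\deg Q_1, \ldots, \deg Q_N)$ is a partition of $r$, so it has at most $r$ nonzero parts, and since $n \geq r$ these parts must all lie in positions $1, \ldots, n$. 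This yields $\deg \bfQ \in \ti\La(n,r)$ and completes the verification. Beyond this short degree-sequence observation, the proof is a clean assembly of Corollary \ref{category equivalence}, Theorem \ref{classification afSrvep}, and Proposition \ref{prop1 of sGvep}, with no further obstacles anticipated.
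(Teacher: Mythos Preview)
Your proposal is correct and follows essentially the same route as the paper, which simply states that the result follows by combining Proposition~\ref{prop1 of sGvep}, Theorem~\ref{classification afSrvep}, and Corollary~\ref{category equivalence}. The only addition you make is to spell out explicitly the combinatorial reason why every $\bfQ\in\QNr$ satisfies $\deg\bfQ\in\ti\La(n,r)$ when $n\geq r$ (the degree sequence is a partition of $r$), which the paper leaves implicit.
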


\end{document}